\newtheorem{thm}{Theorem}
\newtheorem{cor}[thm]{Corollary}
\newtheorem{lem}[thm]{Lemma}
\newtheorem{prop}[thm]{Proposition}
\theoremstyle{definition}
\newcommand{\C}{\mathbb{C}}
\newcommand{\D}{\mathbb{D}}
\newcommand{\R}{\mathbb{R}}
\newcommand{\slr}{\mathrm{SL}_2(\mathbb{R})}
\newcommand{\pslc}{\mathrm{PSL}_2(\mathbb{C})}
\newcommand{\Te}{Teichm\"{u}ller }
\newcommand{\tg}{\mathcal{T}_g }
\newcommand{\mg}{\mathcal{M}_g }
\newcommand{\cp}{\mathbb{C}\mathrm{P}^1}
\title[Quadratic differentials in Teichm\"{u}ller theory]{Holomorphic quadratic differentials\\ in Teichm\"{u}ller theory}
\author{Subhojoy Gupta}
\address{Department of Mathematics, Indian Institute of Science, Bangalore 560012, India.} 
\email{subhojoy@iisc.ac.in}
\begin{document}

\begin{abstract} This expository survey describes how holomorphic quadratic differentials arise in several aspects of \Te theory,  highlighting their relation with various geometric structures on surfaces. The final section summarizes results for non-compact surfaces of finite type, when the quadratic differential has poles of finite order at the punctures. 
\end{abstract}

\setcounter{tocdepth}{1}

\maketitle
\tableofcontents

\section{Introduction}

Let $S$ be a closed oriented smooth surface of genus $g\geq 2$. 
The \Te space $\mathcal{T}_g$ of $S$ is the space of marked complex structures on $S$, as well as marked hyperbolic structures on $S$.
These two equivalent viewpoints give rise to a rich interaction between the complex-analytic and geometric tools in \Te theory. 
This survey  aims to highlight some of this interplay, in the context of holomorphic quadratic differentials, which 
 have been a vital part of \Te theory since its inception (see, for example, \cite{Teich} and the commentary \cite{Teich-comm}).  \\

The selection of topics here is guided by our own interests, and does not purport to be comprehensive; the purpose is to provide a glimpse into several different aspects of \Te theory that involve quadratic differentials in an essential way.  We have aimed to keep the exposition light, to make this survey accessible to a broad audience; in particular, we shall often refer to other sources for proofs, or a more detailed treatment. 
Indeed, there is a vast literature on these topics, and we provide an extensive, but necessarily incomplete, bibliography  at the end of this article.  For some standard books on the subject, see \cite{Streb}, \cite{Gardiner}, \cite{Hubbard}, just to name a few. \\

In the final section, we focus on some recent work concerning  holomorphic quadratic differentials on \textit{punctured} Riemann surfaces, with higher order poles at the punctures. There, we describe generalizations of some of the results for a closed surface  discussed in the preceding sections. One of the novel features of this non-compact setting  is that one needs to define structures on the non-compact ends that depend on some additional parameters at each pole. \\

In a broader context, holomorphic $k$-differentials where $k\geq 3$ arise in higher \Te theory  -- see \cite{Wienhard-et-al}, and the conclusion of \S5\index{higher Teichm\"{u}ller theory}. In particular, holomorphic cubic differentials are known to parametrize the space of convex projective structures on a surface (see \cite{Labourie, Loftin0}) and there has been recent work that develops the correspondence for meromorphic cubic differentials as well (see \cite{Loftin2, Dumas-Wolf, Nie}).  It is an active field of research to develop tools and theorems that generalize the results in the quadratic ($k=2$) case, that this survey discusses, to the case of higher order differentials. \\

\medskip

\textbf{Acknowledgments.} I wish to thank Athanase Papadopoulos for his kind invitation to write this article.  I am grateful to the SERB, DST (Grant no. MT/2017/000706) and the Infosys Foundation for their support.  It is a pleasure to thank Michael Wolf, for sharing his insight, and the many hours of conversation that led to some of the results described in \S8. Last but not the least, I am grateful to Fred Gardiner, and the anonymous referees,  for their suggestions that helped improve this article.

\section{The co-tangent space of \Te space}

Let $X$ be a Riemann surface of genus $g\geq 2$.  We shall assume $X$ is marked,  where a \textit{marking} is a choice of a homotopy class of a diffeomorphism from a fixed smooth surface $S$ to $X$.  Throughout, two such marked surfaces equipped with a structure (\textit{e.g.} a complex structure or hyperbolic metric) are \textit{equivalent} if they are isomorphic via a  map  that preserves the markings.

We formally introduce the central objects of this survey:

A \textit{holomorphic quadratic differential} on $X$ is a holomorphic section of $K \otimes K$, where $K$ is the canonical line bundle on $X$. Locally, such a holomorphic tensor has the form $q(z)dz\otimes dz$ (often written $q(z)dz^2$) where $q(z)$ is a holomorphic function.

Throughout this article, let $Q(X)$ be the complex vector space of holomorphic quadratic differentials  on $X$. Since the canonical line bundle $K$ has degree $2g-2$, the Riemann-Roch formula\index{Riemann-Roch} implies $\text{dim} Q(X) =  \text{deg}(K^2) - g+1 = 3g-3$.

For details of this computation, refer to \cite{FarkasKra} or \cite{Jost}, or any standard reference for Riemann surfaces. \\

The holomorphic quadratic differentials defined above immediately arise in  \Te theory as objects that parametrize the space of infinitesimal deformations of hyperbolic or complex structures on a fixed surface $S$:

\begin{thm} Let $X \in \mathcal{T}_g$. Then the cotangent space $T_X^\ast \mathcal{T}_g$\index{Teichm\"{u}ller space!cotangent space} can be identified with the space $Q(X)$  of holomorphic quadratic differentials on $X$.
\end{thm}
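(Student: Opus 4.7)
The plan is to first identify $T_X\tg$ with a quotient of Beltrami differentials and then exhibit $Q(X)$ as its dual via a natural integration pairing.

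First, I would recall that an infinitesimal deformation of the complex structure on $X$ is encoded by a \emph{Beltrami differential}, i.e.\ a measurable $(-1,1)$-tensor $\mu$, locally of the form $\mu(z)\,d\bar z/dz$. For small $t$, the coefficient $t\mu$ determines (via the Measurable Riemann Mapping Theorem applied on the universal cover) a new complex structure $X_{t\mu}$, yielding a smooth path through $X$ in $\tg$ whose derivative at $t=0$ represents a tangent vector. Two Beltrami differentials give the same tangent vector precisely when their difference is \emph{infinitesimally trivial}, that is, of the form $\bar\partial v$ for a smooth vector field $v$ on $X$; this corresponds to deforming by a diffeomorphism isotopic to the identity. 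Thus, writing $B(X)$ for the space of smooth Beltrami differentials,
\[
T_X \tg \;\cong\; B(X)/\bar\partial\,\Gamma(TX) \;\cong\; H^{0,1}(X, TX).
\]

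Next, I would introduce the pairing $Q(X) \times B(X) \to \C$ given by $(q, \mu) \mapsto \int_X q\,\mu$, observing that the product $q\mu$ is locally $q(z)\mu(z)\,dz\,d\bar z$, which is a genuine $(1,1)$-form and hence intrinsically integrable over $X$. To verify that this pairing descends to $T_X\tg$, one applies Stokes' theorem: if $\mu = \bar\partial v$ then $\int_X q\,\bar\partial v = 0$ because $\bar\partial q = 0$. This yields a well-defined $\C$-linear map $\Phi : Q(X) \to T_X^*\tg$.

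The main point, and the technical core of the theorem, is the non-degeneracy of $\Phi$. Since $\dim_\C Q(X) = 3g-3 = \dim_\C T_X\tg$ by the Riemann-Roch count recorded above, it suffices to prove injectivity: given $q \neq 0$, one needs some $\mu$ with $\int_X q\mu \neq 0$. The cleanest choice is to take $\mu := \bar q/\rho$, where $\rho$ is the area form of the hyperbolic metric on $X$ furnished by uniformization; the local transformation rules confirm that $\bar q/\rho$ is a genuine $(-1,1)$-tensor, and
\[
\int_X q\,\mu \;=\; \int_X \frac{|q|^2}{\rho} \;>\; 0.
\]
This is the main obstacle, in the sense that it is the only step requiring global analytic input (uniformization, or equivalently Serre duality $H^{0,1}(X,TX) \cong H^0(X,K^2)^*$); everything else is formal. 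Combined with the dimension equality, this gives the desired isomorphism $Q(X) \cong T_X^*\tg$.
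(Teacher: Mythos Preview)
Your argument is correct. It follows the same overall strategy as the paper's first proof sketch --- represent tangent vectors by Beltrami differentials and use the integration pairing $\int_X q\mu$ --- but organizes the key step differently. The paper invokes \emph{Teichm\"uller's Lemma}, which characterizes the infinitesimally trivial Beltrami differentials directly as the annihilator $\mathcal{N}$ of $Q(X)$ under the pairing; once that is granted, the identification of the cotangent space with $Q(X)$ is essentially tautological. You instead characterize the trivial deformations as the $\bar\partial$-exact ones (the Dolbeault model $H^{0,1}(X,TX)$, which is really the paper's third, Kodaira--Spencer, viewpoint), verify via Stokes that the pairing descends, and then give an explicit non-degeneracy argument using the harmonic Beltrami differential $\bar q/\rho$ together with the Riemann--Roch dimension count. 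This buys you a more self-contained argument that isolates exactly where uniformization (equivalently Serre duality) enters, at the cost of not linking to the quasiconformal deformation theory that the paper leans on elsewhere. Your closing remark that the content is Serre duality $H^{0,1}(X,TX)\cong H^0(X,K^2)^*$ is precisely the paper's third sketch.
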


\medskip

We sketch three proofs that arise from different ways of defining \Te space.

\begin{itemize}

\item A pair of conformal structures in $\tg$ are related by a  \textit{quasiconformal map} $f$ between them that determines a  \textit{Beltrami differential}  $\mu = f_{\bar{z}}/f_z$  determining its {dilatation} -- see the beginning of  \S4. Thus, Beltrami differential is a $(-1,1)$-differential: it is is  locally expressed as $\mu(z) \frac{d\bar{z}}{dz}$ where $\mu(z)$ is a measurable function having sup-norm less than $1$. 
Any variation of conformal structures is then a derivative 
\begin{equation}\label{dotmu}
\frac{d\mu_t}{dt}\vert_{t=0} = \dot{\mu}
\end{equation}
 that lies in the Banach space $L^\infty_{(-1,1)}(X)$. 

Teichm\"{u}ller's Lemma\index{Teichm\"{u}ller!lemma}  (see \S3.1 of \cite{Nag} ) then asserts that the subspace of Beltrami differentials\index{differential!Beltrami} that determine a \textit{trivial} variation is given by 
\begin{equation}\label{triv}
\mathcal{N} = \{ \mu \text{ }  \lvert \text{ } \langle \mu, \phi\rangle :=  \int_X \mu \phi =0  \text{ for every } \phi \in Q(X) \}
\end{equation}
and thus the \textit{ tangent}  space $T_X \mathcal{T}_g$ can be identified with the quotient space $L^\infty_{(-1,1)}(X)/\mathcal{N}$.

The non-degenerate pairing $\langle\cdot, \cdot \rangle$ in the definition above then identifies the cotangent space $T_X^\ast \mathcal{T}_g$ with $Q(X)$. For more details, see \S7 of \cite{Earle-Eells}, and see \cite{Ahlfors} for a more comprehensive introduction to \Te theory from this point of view.\\

\textit{Remark.} The \Te space $\tg$ can be seen to be homeomorphic to $\R^{6g-6}$ via the Fenchel-Nielsen\index{Fenchel-Nielsen} parametrization that considers length and twist parameters for the  $3g-3$ pants curves of a choice of a pants decomposition  on the genus-$g$ surface (see \cite{FarbMarg}). One way to find a basis of $T^\ast_X \mathcal{T}_g$  is to consider the quadratic differentials that are dual to length and twist deformations --  see \cite{Wolpert} or \cite{Masur-Ext}, or \cite{Rupflin} for some recent related work. On hyper-elliptic surfaces, it is often possible to also give a fairly explicit set of bases (\textit{c.f.} \cite{Veech}).  \\

\item From a differential geometric standpoint, the infinite-dimensional space $\mathcal{H}$ of hyperbolic metrics on the (smooth) surface $S$ has a tangent space at $g\in \mathcal{H}$ that has an orthogonal decomposition
\begin{equation*}
\hspace{.5in}T_g\mathcal{H} = \mathcal{S}_{(0,2)}(g)   \oplus \{\text{Lie derivatives of }g\text{ along smooth vector fields on }S\}
\end{equation*}
where $\mathcal{S}_{(0,2)}(g)$ is the space of \text{traceless and divergence-free symmetric } $(0,2)$-\text{tensors} on the hyperbolic surface $(S,g)$.
By quotienting by the action of the self-diffeomorphisms of $S$, we see that the tangent space to \Te space $\tg$ at $(S,g)$ is $\mathcal{S}_{(0,2)}(g)$. A brief computation (see pg. 46, in \S2.4 of \cite{Tromba}) then shows that any such symmetric $(0,2)$-tensor is in fact the real part of a holomorphic quadratic differential (and vice versa), and we again obtain an identification with $Q(X)$.\\

\item Yet another approach comes from algebraic geometry: \Te space is the universal cover of the moduli space of algebraic curves $\mathcal{M}_g$. By the Kodaira-Spencer deformation theory, the tangent space $T_X\tg$ is canonically identified with $H^1(X, K^{-1})$ where the dual of the canonical line bundle $K^{-1}$ is the sheaf of holomorphic vector fields on $X$. By Serre duality,  this space is dual to $H^0(X,K^2)$, and so the \textit{co-}tangent space is identified with $Q(X)$.  For more details, see the concluding remarks of Appendix A10 in \cite{Hubbard}.

\end{itemize}

\bigskip

We now mention a couple of other aspects of the infinitesimal theory of \Te space, that will crop up in other parts of this survey.

\subsection*{Variation of extremal length}  
The \textit{extremal length}\index{extremal length} of a homotopy class of a curve $\gamma$ on a Riemann surface $X$  is defined to be
\begin{equation}\label{extl}
\text{Ext}_X(\gamma) = \sup\limits_\rho  \frac{L_\rho(\gamma)^2}{A_\rho}
\end{equation}
where $\rho$ varies over all conformal metrics on $X$ of finite $\rho$-area $A_\rho$, and $L_\rho(\gamma)$ is the infimum of the $\rho$-lengths of all curves homotopic to $\gamma$.  It is also the reciprocal of the \textit{modulus} of $\gamma$, denoted $\text{Mod}(\gamma)$. 

For a path $\{X_t\}_{-1<t<1}$ in $\tg$ with Beltrami differential $\dot{\mu}$ (\textit{c.f.} \eqref{dotmu}), Gardiner proved the following formula\index{Gardiner!formula}:
\begin{equation}\label{gform}
\left.\frac{d}{dt}\right\vert_{t=0} \text{Ext}_{X_t}(\gamma) = 2\text{Re}\langle \phi_\gamma, \dot{\mu}\rangle
\end{equation}
where  $\phi_\gamma$ is a holomorphic quadratic differential, and the pairing on the right is the usual one defined in \eqref{triv}.  
In fact $\phi_\gamma$ is a Jenkins-Strebel differential\index{differential!Jenkins-Strebel} (that we shall define in \S3) that corresponds to $\gamma$ via the theorem of Hubbard-Masur\index{Hubbard-Masur theorem}, that we shall discuss in \S6.
See Theorem 8 of \cite{Gardiner-ext}, and for a more recent, alternative proof, see \cite{Wentworth}.

\subsection*{Metrics}
A Riemannian metric on $\tg$ is defined by specifying a smoothly varying positive-definite inner product on each $T_X^\ast\mathcal{T}_g$, which can be canonically identified with the tangent space at $X$, as in the discussion following \eqref{triv}.   One of the most studied of such metrics is the \textit{Weil-Petersson metric}\index{Teichm\"{u}ller space!Weil-Petersson metric} where the inner product is defined to be 
\begin{equation}\label{wpmetric}
\langle \phi, \psi\rangle_{WP} = \int\limits_X \frac{\phi \bar{\psi}}{\rho} \hspace{.1in}\text{ for each pair } \phi,\psi \in Q(X)
\end{equation}
where $\rho$ is the hyperbolic metric on $X$.  See, for example, \cite{Yamada}, \cite{Wolpert-book} for accounts of various aspects of Weil-Petersson geometry. 

Defining a \textit{norm} instead of an inner-product defines a \textit{Finsler} metric on $\tg$, the most notable of which is the \textit{\Te metric}\index{Teichm\"{u}ller space!Teichm\"{u}ller metric} given by $L^1$-norm
\begin{equation}
\lVert q \rVert = \int\limits_X  \lvert q \rvert \hspace{.1in} \text{ where } q \in Q(X) \text{ }.
\end{equation}
As we shall see  in \S4,  holomorphic quadratic differentials are intimately related to the \textit{global} geometry of \Te space in this metric.

\section{Singular-flat geometry}

A non-zero holomorphic quadratic differential $q$ on a Riemann surface $X$ induces a singular-flat metric on the underlying surface $S$ having the  local expression $\lvert q(z) \rvert \lvert dz \rvert^2$. In fact, $q$ induces a  \textit{half-translation structure}\index{half-translation structure} on $S$,  namely: there is a locally-defined canonical coordinates\index{differential!canonical coordinates} in which $q$ has the expression $d\xi^2$ obtained by the coordinate change \begin{equation}\label{cchange}
z\mapsto \xi := \int\limits^z\sqrt q
\end{equation} 
and two such overlapping charts $\xi_1, \xi_2$ satisfy $\left(\frac{d\xi_1}{d\xi_2}\right)^2 =1$ and hence each translation surface is a \textit{half-translation}, that is, a map of the form $\xi_1 = \pm \xi_2 + c$ where $c\in \C$. 

The singular-flat metric can thus be thought of as the Euclidean metric on the canonical charts, and the  singularities are at the zeroes of the quadratic differential. Indeed, at a zero of the form $z^k dz^2$  the coordinate change $z\mapsto \xi = z^{(k+2)/2}$ is a branched cover of the $\xi$-plane, and the metric acquires a cone point of angle $2\pi(k+1)$.

Conversely, a \textit{half-translation surface} is obtained by starting with planar polygons in $\C$ and identifying sides by half-translations such that the singularities are cone points of the above form. Such a surface acquires a conformal structure $X$ since half-translations are holomorphic, and a holomorphic quadratic differential $q$ since such transition maps preserve the canonical quadratic differential $dz^2$ on each polygon. 

Together, this results in the following:

\begin{thm}\label{flatthm} The space of marked half-translation surfaces of genus $g\geq 2$  is in bijective correspondence with the  total space of the cotangent bundle $T^\ast \mathcal{T}_g$\index{Teichm\"{u}ller space!cotangent bundle} except its zero section. 
\end{thm}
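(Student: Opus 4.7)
The plan is to use the two constructions sketched in the discussion preceding the statement as mutually inverse maps between the two spaces.

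Forward direction: Given a non-zero element $(X, q) \in T^\ast \mathcal{T}_g$, I endow the underlying marked smooth surface $S$ with a half-translation structure. Away from the finitely many zeroes of $q$, the local coordinate change \eqref{cchange} yields an atlas of charts to $\C$. Because $\sqrt{q}$ is only locally defined up to sign, the transition map between two overlapping such charts has the form $\xi_1 = \pm \xi_2 + c$, i.e., a half-translation. At a zero of order $k$, the change $z \mapsto z^{(k+2)/2}$ extends the chart as a branched cover of the $\xi$-plane, placing a cone point at the zero. The resulting atlas is a half-translation structure on $S$, and the marking is inherited from $X$.

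Reverse direction: Given a marked half-translation surface, I recover $(X,q)$ as follows. Each chart carries the standard complex structure of $\C$ and the standard quadratic differential $d\xi^2$. Since half-translations are biholomorphisms, the complex structures agree on overlaps, yielding a conformal structure on $S$ away from the cone points. Since $d\xi^2$ is moreover invariant under $\xi \mapsto -\xi + c$, it too is well-defined globally away from the cone points. Near a cone point with local branched coordinate $z \mapsto z^{(k+2)/2}$, this differential takes the form (a constant multiple of) $z^k\, dz^2$, and hence extends holomorphically across the cone point with a zero of order $k$; the conformal structure likewise extends. This yields $(X,q)$ with $q$ a non-zero holomorphic quadratic differential.

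Finally, I would verify that the two operations are mutually inverse, which is essentially visible on the canonical charts, and that the markings are preserved throughout. The zero section of $T^\ast \mathcal{T}_g$ must be excluded because $q \equiv 0$ does not define any singular-flat metric, so the correspondence is naturally with the complement of the zero section. The main point requiring care, in my view, is the extension of both structures across the singular/cone points in each direction; this relies on the explicit branched coordinate change $z \mapsto z^{(k+2)/2}$ noted above, together with the removable singularity theorem for holomorphic functions.
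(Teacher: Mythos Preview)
Your proposal is correct and follows essentially the same approach as the paper: the paper does not give a formal proof but instead states the theorem as the summary of the two constructions sketched immediately before it (quadratic differential $\Rightarrow$ half-translation atlas via canonical coordinates, and half-translation surface $\Rightarrow$ complex structure plus $d\xi^2$), which is exactly what you have written out with added detail on the extension across cone points.
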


For a parallel discussion of \textit{translation} surfaces corresponding to holomorphic $1$-forms, and its relation to the theory of billiards, see the surveys \cite{Wright, Wright2} and \cite{Zorich}.  For more about the structure, closed trajectories, length-spectra and degeneration of such singular-flat metrics, see for example, \cite{Rafi-thick-thin}, \cite{Eskin-Masur}, \cite{Rafi-Duchin-Leininger}, \cite{Fu} and \cite{Morzadec}.  \\

\subsection*{Jenkins-Strebel differentials}\index{differential!Jenkins-Strebel} 
 A {cylinder} of finite height  is a translation surface obtained by identifying two opposite sides of a rectangle on $\C$;  a special class of half-translation surfaces  in $T^\ast \mathcal{T}_g$ are obtained by taking a collection of such flat cylinders and identifying  boundary intervals by half-translations.  Let  $\Gamma = \{\gamma_1,\gamma_2,\ldots \gamma_k\}$  be the collection of core curves of these cylinders. It turns out the induced quadratic differential metric solves the following extremal problem on the underlying Riemann surface $X$:
 \begin{equation}\label{extprob}
\textit{Minimize }  \sum\limits_{i=1}^kw_i^2 Ext_{A_i}(\gamma_i)  \hspace{.1in} \textit{for a fixed positive tuple }(w_1,w_2,\ldots,w_k),
 \end{equation}
 where $\{A_i\}_{1\leq i\leq k}$ is a partition of $X$ into cylinders with core curves in $\Gamma$,  and $\rho$ is a conformal metric on $X$, that restricts to a conformal metric on each $A_i$. 
  
Here $\text{Ext}_S(\gamma)$ denotes the extremal length\index{extremal length} of $\gamma$ on the subsurface $S$ (see \eqref{extl}), and $\{w_i\}_{1\leq i\leq k}$ are positive real weights that equal the heights of the cylinders in the solution metric.  For details, see Chapter 10 of \cite{Gardiner}, Chapter VI of \cite{Streb} for expository accounts, \cite{Liu} for an alternative proof, and \cite{Jenkins} for the original paper. We shall see a toy version of such an extremal problem when we discuss the Gr\"{o}tzsch argument in \S4.
  
  Such holomorphic quadratic differentials are called \textit{Jenkins-Strebel differentials}, and as a consequence of their simple geometric form, these are an important class of examples in a various contexts (for a recent application, see \cite{Markovic} or \cite{GardCK}).

 \subsection*{Strata}\index{strata}  A holomorphic quadratic differential has $\text{deg}(K^2) = 4g-4$ zeroes (counting with multiplicities), and hence the space of half-translation surface of genus $g\geq 2$ has a stratification, where each strata $Q(\bar{k})$  is defined by  a partition $\bar{k}$ of $4g-4$ which determines  the number and orders of the zeroes (see \cite{Veech-strata} and \cite{Masur-Smillie}).  Note that a \textit{generic}  genus-$g$ half-translation surface has $4g-4$ cone points of angle $3\pi$, and constitute the top-dimensional stratum $\mathcal{Q}({1,1,\ldots1})$ or $\mathcal{Q}(1^{4g-4})$, that is an open dense set in  the total space $T^\ast \mathcal{T}_g$.  The topology of these strata is still not completely known; for work in this direction, primarily concerning the number of components, see  \cite{Lanneau}, \cite{Walker} or \cite{Boissy-end}, and also \cite{Barros}. 
 
 \subsection*{Period coordinates}\index{period coordinates} 
Any quadratic differential $q$  in the top stratum lifts to the square of an \textit{abelian differential} (that is, a holomorphic $1$-form) $\omega$ in the \textit{spectral cover} $\hat{X}$, which is the branched double cover of $X$ branched over the $4g-4$ zeroes  of $q$. The \textit{anti-invariant} or \textit{odd} part of the homology $H_1(\hat{X}, \mathbb{C})^{-}$ has dimension $3g-3$ over $\mathbb{C}$ (\textit{c.f.} \cite{D-H}), and the periods of a basis determine local coordinates for  $\mathcal{Q}(1^{4g-4})$. Moreover, the stratum acquires a volume form that is the pullback of the Lebesgue measure on $\C^{3g-3}$ that is called the \textit{Masur-Veech measure}\index{Masur-Veech measure}, which descends to a finite-volume measure on $T^\ast \mathcal{M}_g$  (see \S5 of \cite{Masur-IE}). The problem of computing these Masur-Veech volumes  is discussed in \cite{Goujard, DGZZ}, and is related to various counting problems and billiards (see, for example, \cite{Athreya-Eskin-Zorich}).

\subsection*{Measured foliations}\index{measured foliation}

A \textit{measured foliation} on a smooth surface $S$ is a one-dimensional foliation that is smooth away from finitely many singularities of ``prong type"\index{measured foliation!prong singularity} (where the defining $1$-form is of the form $\text{Re}(z^kdz)$ for $k\geq 1$)  and is equipped with a transverse measure $\mu$. Here, $\mu$ is a non-negative valued measure on arcs transverse to the foliation, that is invariant under a homotopy that keeps the arc transverse.  Two such measured foliations are considered equivalent if the transverse measures that they induce on the set of simple closed curves on $S$ coincide; topologically, an equivalent pair differs by an isotopy of leaves and ``Whitehead moves" (see Figure 1).

\begin{figure}[h]
	
  \centering
  \includegraphics[scale=0.35]{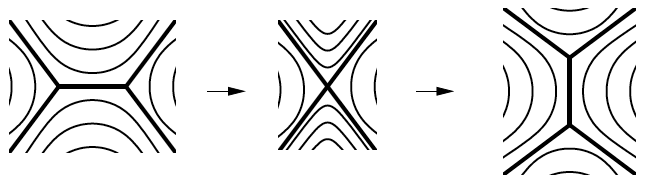}\\
  
  \caption{A Whitehead move preserves the equivalence class of a measured foliation.}
  \label{lune}
\end{figure}

In fact, the transverse measures of  finitely many simple closed curves suffice to uniquely determine an equivalence class of a measured foliation. Indeed, the space $\mathcal{MF}$ of  measured foliations on $S$, up to the equivalence  described above, is homeomorphic to $\mathbb{R}^{6g-6}$. A particularly useful set of coordinate charts are provided by \textit {weighted train-tracks}\index{train track} on the surface. (See  \cite{FLP}, \cite{PenHar} for details.)\\

A holomorphic quadratic differential $q$ on $X$  determines  a pair of such measured foliations on the surface, that we shall now describe. 
Since $q$ determines a  bilinear form  $q: T_xX \otimes T_x X \to \mathbb{C}$ at any point $x\in X$ away from the zeroes, at each such point there is a unique (un-oriented) \textit{horizontal (resp. vertical) direction} $v$ where $q(v,v)\in \mathbb{R}^{+} (\text{resp.} \mathbb{R}^{-} )$ . Integral curves of this line field on $X$ determine the \textit{horizontal  (resp. vertical) foliation} on $X$. 

These  horizontal and vertical foliations are exactly the foliations by horizontal and vertical lines in the canonical coordinates $\xi$ for the quadratic differential (see \eqref{cchange}).  In particular, \textit{at} a zero of order $k$,  the horizontal (and vertical) foliation has a  \textit{$(k+2)$-prong singularity} since it is the pullback of the horizontal  (or vertical) foliation on the $\xi$-plane by the change of coordinates $z\mapsto \xi = z^{k/2 +1}$.

The transverse measure on the  horizontal (resp. vertical) foliation  is defined as follows: in the case $\gamma$ is contained in canonical coordinates $\xi$, the transverse measure of $\gamma$ across the vertical foliation is $\int\limits_\gamma \lvert \text{Re } d\xi \rvert$ (the horizontal distance between the endpoints) and the transverse measure across the horizontal foliation is $\int\limits_\gamma \lvert \text{Im } d\xi \rvert$ (the vertical distance).  In general, one adds such integrals along a covering of $\gamma$ by canonical coordinate charts; this is well-defined as the above integrals are preserved under transition maps (half-translations).

In fact, this pair of transverse measured foliations uniquely recovers the quadratic differential it came from (see \cite{Gard-Mas}). \\

\begin{figure}
	
  \centering
  \includegraphics[scale=0.43]{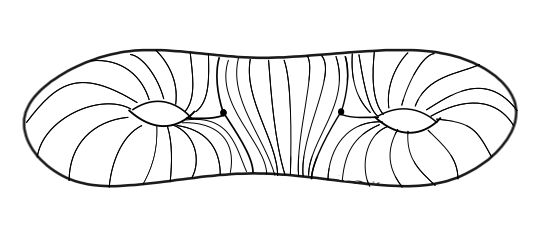}\\
  
  \caption{A Jenkins-Strebel differential induces a measured foliation with all non-critical leaves closed.}
  \label{lune}
\end{figure}

\textit{Example.} For a half-translation surface built from planar polygons as we saw earlier, the horizontal and vertical foliations are respectively induced from the foliations by the horizontal and vertical lines in the plane. In particular, for a Jenkins-Strebel differential, if we assume that each cylinder is obtained by identifying vertical sides of a rectangle, {all non-critical horizontal trajectories are closed}  (a \textit{critical} trajectory is one that starts or ends at a prong-singularity) -- see Figure 2. Such a measured foliation can be represented as a disjoint collection of weighted simple closed curves, that are the core curves of the foliated cylinders, where the weights represent the transverse measures across them.  Indeed, these weighted multicurves are in fact dense in $\mathcal{MF}$; this fact follows easily from the train-track coordinates mentioned earlier.  \\

Trajectory structures for measured foliations\index{measured foliation!trajectory structure} induced by quadratic differentials was a significant area of research (see  \cite{Jenkins}) until  the work of Hubbard and Masur\index{Hubbard-Masur theorem}, that we shall discuss in \S6,  showed that the space of such structures coincides with the space  $\mathcal{MF}$, that was defined by Thurston in a different context (see \cite{FLP}). Thurston showed that the space of \textit{projectivized} measured foliations  $\mathbb{P}\mathcal{MF}$\index{measured foliation!projectivized}\index{Thurston!compactification} (obtained by scaling the measures by a positive real factor) determines a compactification $\overline{\mathcal{T}_g} = \mathcal{T}_g \sqcup \mathbb{P}\mathcal{MF}$,  such that the action of the mapping class group extends continuously to this boundary.  For the equivalent notion of \textit{measured laminations}\index{measured lamination} and the corresponding space $\mathcal{ML}$,  and a proof of the equivalence, see \cite{Kapovich}.

\section{Extremal maps and \Te geodesics} 

A quasiconformal (or qc-) map $f:X\to Y$\index{quasiconformal!map} between a pair of Riemann surfaces is a homeomorphism with a weak (distributional) derivative that is locally square-integrable, such that the Beltrami differential  
\begin{equation}\label{Beltrami}\index{Beltrami!equation}
\mu =  f_{\bar{z}}/f_z
\end{equation}
that is defined almost everywhere, satisfies $\lVert \mu \rVert_\infty  < 1$. Alternatively, the map $f$ has bounded \textit{dilatation} $K(f) := (1 + \lVert \mu \rVert_\infty)/(1 - \lVert \mu \rVert_\infty )$\index{quasiconformal!dilatation}. \\

\textit{Example.} An affine stretch map $f(x,y) = (x,\kappa y)$ is a quasiconformal map between two planar rectangles: the unit square $R_1$ and a rectangle $R_2$ of width one and height $\kappa $. A short computation (after converting to $z$ and $\bar{z}$ coordinates) shows that $K(f) = \kappa$.\\

When a Beltrami differential $\mu$ satisfying $\lVert \mu \rVert_\infty  < 1$ is specified, \eqref{Beltrami}  is called the \textit{Beltrami equation}\index{Beltrami!equation} for $f$.  The Measurable Riemann  Mapping Theorem\index{measurable Riemann mapping theorem} asserts that in the case $X=Y = \hat{\C}$, there always exists a solution to the Beltrami equation that is unique up to post-composition with a M\"{o}bius transformation. 

For a precise statement, see Chapter V of  \cite{Ahlfors}, and the other chapters there for a thorough treatment of the theory of quasiconformal maps. We highlight the following elementary, but key, lemma of Gr\"{o}tzsch (see Chapter I.B of \cite{Ahlfors}):

\begin{lem}[{Gr\"{o}tzsch}]\label{grz}\index{Gr\"{o}tzsch argument} The affine stretch map $f:R_1 \to R_2$ as above, has the \textit{least} quasiconformal dilatation amongst all quasiconformal maps from $R_1$ to $R_2$ that map vertices to vertices.
\end{lem}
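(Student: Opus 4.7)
The plan is a length-area argument. Without loss of generality take $\kappa \geq 1$, so that $K(f)=\kappa$; let $g:R_1\to R_2$ be an arbitrary quasiconformal map sending vertices to vertices, and show $K(g)\geq \kappa$. The idea is to bound the length of the image of a typical vertical segment from below by the height $\kappa$ of $R_2$, then bound it from above (after squaring) using Cauchy-Schwarz and the pointwise identity relating the partial derivatives, the Jacobian, and the dilatation, and finally integrate and apply the change-of-variables formula.

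First, because $g$ sends vertices to vertices and is an orientation-preserving homeomorphism, the horizontal sides of $R_1$ are carried onto the horizontal sides of $R_2$. Hence for almost every $x\in[0,1]$ the arc $g(\{x\}\times[0,1])$ joins the two horizontal sides of $R_2$, and therefore has Euclidean length at least $\kappa$, giving
\begin{equation*}
\kappa \;\leq\; \int_0^1 |\partial_y g(x,y)|\, dy.
\end{equation*}
Squaring, applying Cauchy-Schwarz, and integrating over $x\in[0,1]$ yields
\begin{equation*}
\kappa^2 \;\leq\; \iint_{R_1} |\partial_y g|^2\, dx\, dy.
\end{equation*}

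Next, one invokes the pointwise estimate
\begin{equation*}
|\partial_y g|^2 \;=\; |g_z - g_{\bar z}|^2 \;\leq\; (|g_z|+|g_{\bar z}|)^2 \;=\; K_g(z)\, J_g(z),
\end{equation*}
valid almost everywhere, where $K_g(z)=(|g_z|+|g_{\bar z}|)/(|g_z|-|g_{\bar z}|)$ is the local dilatation and $J_g(z)=|g_z|^2-|g_{\bar z}|^2$ is the Jacobian. Since $K_g(z)\leq K(g)$ a.e. and the change-of-variables identity gives $\iint_{R_1} J_g\, dx\, dy = \mathrm{Area}(R_2)=\kappa$, the previous inequality becomes
\begin{equation*}
\kappa^2 \;\leq\; K(g)\iint_{R_1} J_g\, dx\, dy \;=\; K(g)\cdot \kappa,
\end{equation*}
so $K(g)\geq \kappa=K(f)$, as required.

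The only subtle point is the a.e. regularity machinery: one needs that a quasiconformal homeomorphism is absolutely continuous on almost every horizontal and vertical line (the ACL property), so that partial derivatives exist a.e., the Fubini reductions are legitimate, and the length estimate on fibres is justified; one also uses that the change-of-variables formula $\iint J_g = \mathrm{Area}(g(R_1))$ holds for qc maps. These are standard facts from the foundational theory of qc maps (see Chapter III of \cite{Ahlfors}), and once they are granted the Grötzsch inequality is a two-line length-area computation.
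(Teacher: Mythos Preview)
Your proof is correct and is essentially the same length--area argument as the paper's: both bound the length of the image of a family of parallel segments from below by the side-length of the target rectangle, apply Cauchy--Schwarz, and use the change-of-variables formula for the Jacobian. The differences are cosmetic: the paper first normalizes so the rectangles share a height and then tracks horizontal segments, while you work with vertical segments in the original setup; the paper applies Cauchy--Schwarz to the integral of $|g_z|+|g_{\bar z}|$ and factors out the dilatation as a separate integral, while you use the pointwise inequality $|\partial_y g|^2 \leq K_g(z)J_g(z)$ directly, which is a slightly slicker packaging of the same estimate.
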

\begin{proof}
The proof uses what is known as the \textit{length-area} method. Since post-composing $f$ by a conformal dilation does not change its Beltrami differential, we can assume that the rectangles have the same height, namely, $R_i = \{(x,y) \text{ } \vert \text{ }0\leq x\leq L_i \text{ },\text{ } 0 \leq y\leq H\}$ for $i=1,2$. 
Moreover, we can also assume that $L_2\geq L_1$, by interchanging the horizontal and vertical sides if necessary.

Let $f$ be the least-dilatation quasiconformal map between them. Since $f$ is a homeomorphism, the image of any horizontal line  between the vertical sides of $R_1$ is some arc between the vertical sides of $R_2$.
In particular, we have:
\begin{equation*}
L_2 \leq \displaystyle\int_0^{L_1} f_x(t, y) dx  \leq \displaystyle\int_0^{L_1} (\lvert f_{\bar{z}}\rvert + \lvert f_z\rvert ) dx 
\end{equation*}
for any $y\in [0,H]$.
Integrating in the $y$-direction, we then obtain:
\begin{equation*}
L_2H \leq \displaystyle\int_0^{H}\displaystyle\int_0^{L_1} (\lvert f_{\bar{z}}\rvert + \lvert f_z\rvert )  dxdy 
\end{equation*}
Applying the Cauchy-Schwarz inequality we have:
\begin{equation*}
L_2^2H^2 \leq  \displaystyle\int_{R_1} \left(\frac{1+ \lvert\mu\rvert}{1-\lvert\mu\rvert}\right) dxdy  \displaystyle\int_{R_1} (\lvert f_{\bar{z}} \rvert^2 - \lvert f_z\rvert^2) dxdy
\end{equation*}
where $\mu$ is the Beltrami differential as in \eqref{Beltrami}.
Since the integrand in the second term on the right hand side is the Jacobian of $f$, the change of variables formula yields that the second factor on the right equals $Area(R_2) = L_2H$. After rearranging terms, we obtain
\begin{equation*}
\frac{L_2}{L_1} \leq \frac{1}{Area(R_1)} \displaystyle\int_{R_1} \left(\frac{1+ \lvert\mu\rvert}{1-\lvert\mu\rvert}\right) dxdy.  
\end{equation*} 
The right hand side is the \textit{average} value of the integrand over $R_1$, and we obtain in particular, that
\begin{equation*}
\frac{L_2}{L_1} \leq \sup\limits_{R_1} \left(\frac{1+ \lvert\mu\rvert}{1-\lvert\mu\rvert}\right). 
\end{equation*} 
It can easily be checked that in fact, equality holds for the map 
\begin{equation*}
	f(z) = \frac{L_2}{L_1}\left(\frac{z + \bar{z}}{2}\right) +  \left(\frac{z-\bar{z}}{2}\right)
\end{equation*}
that is an affine stretch in the $x$-direction.
\end{proof}

\medskip

Historically, the first \textit{global} parametrization of Teichm\"{u}ller space\index{Teichm\"{u}ller space!Teichm\"{u}ller's parametrization} with holomorphic quadratic differentials arose in the context of  such {extremal maps}\index{Teichm\"{u}ller!map}:

\begin{thm}[Teichm\"{u}ller]\label{param}\index{Teichm\"{u}ller!theorem} 
For any pair of marked Riemann surfaces $X,Y\in \tg$ there is a unique quasiconformal map (the \text{\Te map}) $F:X\to Y$ that minimizes $K(f)$ over all quasiconformal maps $f:X\to Y$ that preserve the marking. 

The map $F$ is obtained by an affine stretch map 
\begin{equation}\label{stretch}
(x,y) \mapsto (\kappa^{-1/2}x,\kappa^{1/2}y) \hspace{.1in} \text{where } \hspace{.1in} \kappa > 1
\end{equation} 
on the canonical coordinates $\xi = x+iy$  of some holomorphic quadratic differential $q$ on $X$.

Moreover, for any such constant $\kappa>1$  the map 
\begin{equation*}
\Psi:  Q(X) \to \tg 
\end{equation*} 
is a homeomorphism, where $\Psi(q) = X_{q, \kappa}$, the marked Riemann surface obtained by an affine stretch of the canonical coordinates of $q$ as above. 
\end{thm}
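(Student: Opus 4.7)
I would approach this theorem as a far-reaching generalization of Gr\"otzsch's Lemma~\ref{grz}: the horizontal and vertical foliations of a holomorphic quadratic differential on $X$ replace the horizontal and vertical lines of a rectangle, and the extremal map is modelled on the affine stretch in those canonical coordinates. The argument splits into (i) existence of the minimizer, (ii) its uniqueness together with its explicit Teichm\"uller form~\eqref{stretch}, and (iii) the parametrization statement for $\Psi$.

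For (i), let $K_0 = \inf K(f)$ over all quasiconformal $f:X\to Y$ preserving the marking, and let $\{f_n\}$ be a minimizing sequence. After lifting to the universal cover and normalizing, quasiconformal maps of uniformly bounded dilatation form a normal family by the standard H\"older estimate, so a subsequence converges locally uniformly to a quasiconformal $F$ with $K(F)\leq K_0$, hence $K(F)=K_0$. The geometric content is to recognize the Beltrami differential of $F$: a variational argument, using that trivial deformations are precisely those with vanishing pairing~\eqref{triv} against $Q(X)$, forces $\mu_F$ to take the form $k\bar q/\lvert q\rvert$ for some $q\in Q(X)$ and $k=(\kappa-1)/(\kappa+1)\in(0,1)$. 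In the canonical $\xi$-coordinates of $q$, the Beltrami equation with this coefficient is solved explicitly by the affine stretch~\eqref{stretch}.

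For (ii), I would replay the length-area computation of Lemma~\ref{grz} in the canonical charts of $q$. If $g:X\to Y$ is any competitor with $K(g)\leq\kappa$, one integrates $\lvert g_{\bar\xi}\rvert+\lvert g_\xi\rvert$ along horizontal trajectories of $q$ and then in the transverse direction using the transverse measure defined in~\S3, exactly mirroring the rectangle argument. Cauchy--Schwarz together with the Jacobian change-of-variables formula forces the essential supremum of $(1+\lvert\mu_g\rvert)/(1-\lvert\mu_g\rvert)$ to equal $\kappa$, and equality throughout pins down $\mu_g=\mu_F$ a.e., so $g=F$.

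For (iii), injectivity of $\Psi$ follows from uniqueness: $\Psi(q_1)=\Psi(q_2)$ implies both affine stretches are the same Teichm\"uller map, so $\mu_{q_1}=\mu_{q_2}$, and since a quadratic differential is determined by its horizontal and vertical measured foliations (noted in~\S3), $q_1=q_2$. Surjectivity is the existence-plus-form statement in (i)--(ii): any $Y\in\tg$ is the target of a Teichm\"uller map whose Beltrami differential yields the required $q$. Continuity of $\Psi$ comes from the continuous dependence of the normalized solution of the Beltrami equation on its coefficient, via the Measurable Riemann Mapping Theorem. Finally, since $Q(X)$ and $\tg$ are both real manifolds of dimension $6g-6$, invariance of domain promotes the continuous bijection $\Psi$ to a homeomorphism. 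The main obstacle is step (ii), and hand in hand with it the identification of $\mu_F$ with the Teichm\"uller form $k\bar q/\lvert q\rvert$ in step (i); existence is then soft compactness analysis and the homeomorphism statement a formal consequence.
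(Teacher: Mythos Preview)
Your outline is sound and the overall architecture---Gr\"otzsch-type length--area for uniqueness, Measurable Riemann Mapping Theorem for continuity, and invariance of domain to finish---matches the paper's sketch. The genuine divergence is in how you complete the homeomorphism statement for $\Psi$.

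The paper does \emph{not} argue surjectivity directly. Instead it proves that $\Psi$ is proper, using Wolpert's lemma that a $K$-quasiconformal map distorts hyperbolic lengths by at most a factor of $K$; together with injectivity (from uniqueness) and continuity, properness plus invariance of domain yields the homeomorphism, and surjectivity---hence the existence of a Teichm\"uller map to \emph{every} $Y$---falls out as a corollary. In this ordering, existence of the extremal map in the form~\eqref{stretch} is a consequence of the parametrization, not an input to it.

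You reverse the logic: you first obtain an extremal map by normal-family compactness, then identify its Beltrami coefficient as $k\bar q/\lvert q\rvert$ via a variational (Hamilton--Krushkal) argument exploiting the pairing~\eqref{triv} and the finite-dimensionality of $Q(X)$, and only then read off surjectivity of $\Psi$. This is the classical Ahlfors--Bers route and is perfectly valid; the point you gloss over---that the Hamilton condition actually forces the Teichm\"uller form---works precisely because on a compact surface the unit sphere in $Q(X)$ is compact, so the supremum in the Hamilton condition is attained and the equality case pins down $\mu$ almost everywhere. The paper's properness argument trades this analytic step for a geometric one, and has the mild advantage of not needing to extract the form of $\mu_F$ before establishing the parametrization. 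Your approach, on the other hand, makes the existence theorem logically independent of the map $\Psi$.
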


The proof of the first part is a more elaborate version of the argument\index{Gr\"{o}tzsch argument}  we saw in the proof of Lemma \ref{grz}  -- see for example, Chapter 11 of \cite{FarbMarg}, or Chapter 4.6 of \cite{Gard-Lakic}. Indeed, the proof given in \cite{Gard-Lakic} relies on the \textit{Reich-Strebel inequality} (\cite{ReichStreb}, and for more about its relation to the {Gr\"{o}tzsch} argument, see \cite{MarkMat}.   To show that $\Psi$ is a homeomorphism, it suffices to prove that the map is proper, injective and continuous, by Brouwer's Invariance of Domain.  Properness is a consequence of the fact that a quasiconformal map can only distort hyperbolic lengths by a bounded multiplicative factor (Lemma 3.1 of \cite{Wolpert-lem}), and injectivity is the uniqueness part of Teichm\"{u}ller's theorem.  The proof of continuity of the map is not obvious,  however:  but as one varies $q$ continuously, so do the  Beltrami differentials, and one then uses the continuous (in fact, analytical) dependence on parameters for solutions of the  Measurable Riemann Mapping Theorem.

\subsection*{\Te rays} 
The \textit{\Te map} $F$\index{Teichm\"{u}ller!map} in Theorem \ref{param} achieves the {\Te distance} between the pair $X$ and $Y$ defiined by
\begin{equation*}
d_{\mathcal{T}}(X,Y) := \frac{1}{2} \inf\limits_{f:X\to Y}\log K(f) 
\end{equation*} 
where $f$ are quasiconformal homeomorphisms that preserve the markings. 
Kerckhoff gave an alternative formula for this distance in terms of extremal length:
\begin{equation*}
d_{\mathcal{T}}(X,Y) = \frac{1}{2} \inf\limits_{\gamma\in \mathcal{S}}\log  \frac{\text{Ext}_X(\gamma)}{\text{Ext}_Y(\gamma)} 
\end{equation*} 
where $\mathcal{S}$ is the set of all simple closed curves on the underlying topological surface (see \cite{Kerck}).

It is then a consequence of Gardiner's formula for the variation of extremal length\index{extremal length} \eqref{gform}, that this distance is exactly the one that arises from the \Te metric defined in \S2.

Indeed, the one-parameter family of surfaces  $X_{q, t}$ (for $t\geq 0$)  obtained by \Te maps with stretch-factors $\kappa(t) = e^{t}$ is a path in the cotangent bundle $T^\ast \tg$  via the identification in Theorem \ref{flatthm}, and  is a \textit{geodesic ray}\index{Teichm\"{u}ller!ray} from the initial point $(X,q)$.  For details, see Chapter 9.3 of \cite{Lehto} or  Chapter 7 of \cite{Gardiner}.

\subsection*{\Te dynamics} 
The preceding geometric description of geodesics in the \Te metric is extremely useful, and is notably absent for, say, Weil-Petersson geodesics, which makes it the latter more difficult to study.   In particular, one can show that $\tg$ equipped with the \Te metric is a complete and geodesic metric space. Jenkins-Strebel\index{differential!Jenkins-Strebel} quadratic differentials determine ``parallel" geodesic rays that remain a bounded distance from each other, and hence the \Te metric is not negatively curved (\cite{Masur}, \cite{MasurWolf}), though it does have various features of negative curvature (see, for example, \cite{DuchinMasur}). For a survey concerning the geometry of $\tg$ with the \Te metric, see \cite{Masur-survey}. 

Indeed, the seminal work of Masur and, independently, Veech  showed that that \Te geodesic flow on $T^\ast \mathcal{M}_g$ is ergodic (\cite{Masur-IE}, \cite{Veech}) with respect to the Masur-Veech measure mentioned in \S3. This has had an enormous influence in \Te theory, with myriads of applications, especially to various counting problems, starting with \cite{Veech-Siegel} and \cite{Eskin-Masur}.\index{Teichm\"{u}ller space!geodesic flow}

The affine stretch maps that determine a \Te geodesic ray can be thought of as being obtained by the action of the diagonal subgroup of $\slr$; the orbit of the entire group $\slr$ is a \textit{\Te disk}\index{Teichm\"{u}ller!disk}. The $\slr$-action on $T^\ast \mg$ has been  studied  recently at great depth, developing the analogy with similar actions on homogeneous spaces, and the program of classifying orbit closures, that started with  the work of McMullen (\cite{McMullen-genus2}), includes the work of Eskin-Mirzakhani-Mohammadi (\cite{E-M-M}, see also \cite{Wright2} for an expository account).

\section{Hopf differentials of harmonic maps}

A \textit{harmonic map }$h:X \to Y$\index{harmonic map} between two Riemann surfaces equipped with conformal metrics $(X, \sigma\lvert dz \rvert^2)$ and $(Y,\rho\lvert dw\rvert^2)$ is a critical point of the energy functional 
\begin{equation*}
\mathcal{E}(f) =  \frac{1}{2}\displaystyle\int\limits_X  \lVert df\rVert^2 \sigma dzd\bar{z}
\end{equation*}
where the integrand is the \textit{energy density}, also written as $e(f)$. 
Equivalently,  $h$ satisfies the following \textit{harmonic map equation} :
\begin{equation}\label{harm}
\Delta h +  (\log \rho)_w h_z h_{\bar{z}} = 0
\end{equation}
in any local chart, which is obtained as an Euler-Lagrange reformulation. 

When the target surface $Y$ is strictly  negatively curved (\textit{e.g.} a hyperbolic metric), then such a map exists in the homotopy class of any diffeomorphism (\cite{E-S}). Moreover, such a harmonic map is unique and consequently energy-minimizing (\cite{Hartman}), and is itself also a diffeomorphism (\cite{Sampson}, \cite{Schoen-Yau}). 
The pullback of the metric on $Y$ has the local expression:
\begin{equation}\label{pullb} 
h^\ast \rho = qdz^2 + \sigma e(h) dzd\bar{z} + \bar{q} d\bar{z}^2
\end{equation}
and the \textit{Hopf differential}\index{differential!Hopf} is defined to be $ qdz^2$, which is the $(2,0)$-part of the above metric. \\

The relation with  holomorphic quadratic differentials arises from the fact:

\begin{lem} A diffeomorphism $h:X\to Y$ is harmonic if and only if its Hopf differential $q$ is a holomorphic quadratic differential on $X$.
\end{lem}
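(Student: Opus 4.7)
The plan is to express the Hopf differential in local coordinates and compute $\partial_{\bar z} q$ directly, aiming to see the harmonic map equation \eqref{harm} appear as a factor. From the pullback expression \eqref{pullb}, the Hopf differential has the local form $q = \rho(h)\, h_z\, \overline{h_{\bar z}}$, obtained as the $(2,0)$-part of the pulled-back metric. I would differentiate this product by the product and chain rules, carefully tracking that $\partial_{\bar z}[\rho(h)] = \rho_w(h)\, h_{\bar z} + \rho_{\bar w}(h)\, \overline{h_z}$ because $\rho$ is evaluated at the moving point $h(z)$.

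After expanding and regrouping the four resulting terms, I expect to obtain the identity
\begin{equation*}
\partial_{\bar z} q \;=\; \rho(h)\,\overline{h_{\bar z}}\cdot \tau(h) \;+\; \rho(h)\, h_z \cdot \overline{\tau(h)},
\end{equation*}
where $\tau(h) := h_{z \bar z} + (\log \rho)_w\, h_z\, h_{\bar z}$ is precisely the left-hand side of the harmonic map equation \eqref{harm}. From this identity the forward direction is immediate: if $h$ is harmonic then $\tau(h) \equiv 0$, so $\partial_{\bar z} q \equiv 0$ and $q$ is holomorphic.

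For the converse, the subtlety I anticipate as the main obstacle is that $\partial_{\bar z} q = 0$ gives only a single complex equation relating $\tau(h)$ to its conjugate, rather than $\tau(h) = 0$ outright. This is where the hypothesis that $h$ is a diffeomorphism enters: taking absolute values in $\overline{h_{\bar z}}\, \tau(h) = -h_z\, \overline{\tau(h)}$ gives $(|h_z| - |h_{\bar z}|)\,|\tau(h)| = 0$, and the non-vanishing of the Jacobian $|h_z|^2 - |h_{\bar z}|^2$ forces $|h_z| \neq |h_{\bar z}|$ pointwise, so $\tau(h) \equiv 0$ and $h$ satisfies \eqref{harm}. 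I would keep the opening calculation compact, and highlight the Jacobian step as the only nontrivial geometric input.
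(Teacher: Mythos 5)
Your proposal is correct and follows essentially the same route as the paper: compute $\partial_{\bar z}q$ for $q=\rho(h)\,h_z\,\overline{h_{\bar z}}$ to obtain the identity expressing it in terms of the tension field and its conjugate, then in the converse direction use the non-vanishing Jacobian of the diffeomorphism to rule out $|h_z|=|h_{\bar z}|$ and conclude the tension field vanishes. The only cosmetic difference is the explicit factor $\rho(h)$ carried along in your identity, which is harmless.
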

\begin{proof}
By virtue of \eqref{pullb}, we have $q = \rho h_z \bar{h}_z dz^2$.
One can check that by an application of the Chain Rule that 
\begin{equation*}
\frac{\partial}{\partial \bar{z}} ( \rho h_z \bar{h}_z ) = h_z \overline{\text{EL}} + \bar{h}_z{\text{EL}} 
\end{equation*}
where $\text{EL}$ is the left hand side of \eqref{harm}. 

If $h$ is harmonic, $\text{EL} =0$ and hence $\frac{\partial}{\partial \bar{z}} ( \rho h_z \bar{h}_z ) = 0$
that shows that $q$ is a holomorphic quadratic differential.

Conversely,  if $q$ is holomorphic, we have 
\begin{equation*}
 h_z \overline{\text{EL}} + \bar{h}_z{\text{EL}} =0
\end{equation*}
and if $\text{EL} \neq 0$ at a point $z_0$, this would imply $\lvert h_z\rvert = \lvert \bar{h}_z \rvert = \lvert h_{\bar{z}} \rvert$ and hence the Jacobian 
\begin{equation*}
J(z_0) = \lvert h_{\bar{z}} \lvert^2 - \lvert h_{{z}} \lvert^2 =0
\end{equation*}
which contradicts the fact that $J \neq 0$ everywhere (since $h$ is a diffeomorphism).
\end{proof}

\subsection*{Wolf's parametrization of $\tg$} 
Like for extremal quasiconformal maps in the previous section, harmonic maps provide a parametrization of \Te space:

\begin{thm}[Wolf, \cite{Wolf0}]\label{wparam}\index{Teichm\"{u}ller space!Wolf's parametrization}\index{Wolf's parametrization} Fix a basepoint $X \in \tg$. Let $$\Phi: \tg \to Q(X)$$ be map that assigns to any marked hyperbolic surface $Y$, the Hopf differential of the unique harmonic map $h_Y: X\to Y$ that preserves the markings.  Then $\Phi$ is a homeomorphism.
\end{thm}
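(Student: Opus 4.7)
The strategy is to verify that $\Phi$ is continuous, injective, and proper, and then invoke Brouwer's Invariance of Domain: both $\mathcal{T}_g$ and $Q(X)$ are homeomorphic to $\mathbb{R}^{6g-6}$, so a continuous, injective, proper map between them is automatically a homeomorphism.

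The central tool will be a Bochner-type PDE satisfied by the (normalized) energy density. I equip the basepoint $X$ with its hyperbolic metric $\sigma|dz|^2$ of curvature $-1$, and for the harmonic diffeomorphism $h = h_Y$ rewrite the decomposition \eqref{pullb} as $h^\ast\rho = \phi\, dz^2 + \sigma\mathcal{H}\,dz\,d\bar z + \bar{\phi}\,d\bar z^2$, with $\phi = \Phi(Y)$. Setting $w = \tfrac{1}{2}\log \mathcal{H}$ and using \eqref{harm} together with curvature $-1$ on $Y$, a direct computation yields the semilinear elliptic equation
\[
\Delta_\sigma w \;=\; e^{2w} \;-\; |\phi|^2_\sigma\, e^{-2w} \;-\; 1, \qquad (\ast)
\]
where $|\phi|_\sigma^2 := |\phi|^2/\sigma^2$. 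The crucial feature of $(\ast)$ is that its right-hand side is strictly monotone increasing in $w$.

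Injectivity then falls out of a maximum-principle argument applied to $(\ast)$. If $Y_1, Y_2 \in \mathcal{T}_g$ satisfy $\Phi(Y_1) = \Phi(Y_2) = \phi$, the corresponding functions $w_1, w_2$ solve the same equation; at a point where $w_1 - w_2$ attains its maximum, one has $\Delta_\sigma(w_1 - w_2) \leq 0$, whereas strict monotonicity of the nonlinearity forces $w_1 \leq w_2$ globally. Swapping roles gives $w_1 \equiv w_2$, so the pullback metrics $h_1^\ast\rho_1$ and $h_2^\ast\rho_2$ coincide; since the $h_i$ are marking-preserving diffeomorphisms this implies $Y_1 = Y_2$ in $\mathcal{T}_g$. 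Continuity of $\Phi$ is a standard consequence of elliptic regularity: harmonic maps into a strictly negatively curved target depend continuously on the target metric in $C^{k,\alpha}$, so their Hopf differentials do too.

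The main obstacle is properness: I must show that if $Y_n$ leaves every compact set of $\mathcal{T}_g$, then $\lVert \Phi(Y_n)\rVert \to \infty$ in $Q(X)$. The plan is to argue by contrapositive using $(\ast)$: assuming $\phi_n := \Phi(Y_n)$ stays bounded in $Q(X)$, a sub/super-solution analysis of $(\ast)$ --- comparing $w_n$ against the solution for $\phi \equiv 0$ (which is $w \equiv 0$) and a barrier constructed from an $L^\infty$ bound on $|\phi_n|_\sigma$ --- should produce a uniform two-sided bound on $\mathcal{H}_n = e^{2w_n}$. This bounds the pullback metric $h_n^\ast\rho_n$ above and below; since $h_n$ is a diffeomorphism, hyperbolic lengths of simple closed curves on $Y_n$ are then trapped in a bounded range, contradicting divergence of $Y_n$ via Mumford's compactness criterion. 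The delicate technical point is extracting the two-sided $L^\infty$ bound on $w_n$ from bounds on $\phi_n$ using the geometry of $(\ast)$; once this is done, Invariance of Domain closes the argument.
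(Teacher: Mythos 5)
Your proposal has the same skeleton as the paper's (and Wolf's) argument: prove $\Phi$ is continuous, injective and proper, then invoke Invariance of Domain; injectivity via the maximum principle applied to the Bochner equation (the paper's \eqref{boch}, your $(\ast)$); continuity from smooth dependence of the harmonic map on the target. Those steps match the paper essentially verbatim and are correct. Where you genuinely diverge is properness: the paper's route expresses the energy in terms of the Hopf differential and uses that the total energy $Y\mapsto\mathcal{E}(h_Y)$ is a proper function on $\mathcal{T}_g$, whereas you argue pointwise — a bound on $\phi_n$ fed into $(\ast)$ via the maximum principle bounds the stretch of $h_n$, hence the lengths $\ell_{Y_n}(\gamma)$ for a filling family of curves, forcing $Y_n$ into a compact set. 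This is a legitimate and arguably more elementary alternative: it replaces the properness of the energy functional by the properness of the length functions of a filling curve system on $\mathcal{T}_g$, which is the statement you should cite rather than ``Mumford's compactness criterion'' (Mumford's criterion concerns unmarked moduli space and a systole lower bound; for marked surfaces, \emph{upper} bounds on a filling system already suffice). In particular you neither need, nor can you easily extract, the lower half of your ``two-sided'' bound on the pullback metric: its smallest eigenvalue is governed by $\bigl(\sqrt{H}-\sqrt{L}\bigr)^2$, which your estimates do not control from below.

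Two small technical corrections. First, in \eqref{pullb} the middle coefficient is $\sigma e(h)$ with $e(h)=H+L$ the full energy density, while the equation $(\ast)$ is satisfied by $w=\tfrac12\log H$, where $H=\tfrac{\rho}{\sigma}\lvert h_z\rvert^2$ is the holomorphic energy density (positive since $h$ is a diffeomorphism); as written, your $\mathcal{H}$ conflates the two. Nothing essential changes, since $L=\lvert\phi\rvert_\sigma^2/H$, so the maximum principle at interior extrema of $H$ gives $1\le H\le\tfrac12\bigl(1+\sqrt{1+4\sup\lvert\phi\rvert_\sigma^2}\bigr)$, which bounds $e(h)=H+L$ and hence the Lipschitz constant of $h_n$, exactly what your properness argument needs (finite-dimensionality of $Q(X)$ converts boundedness in any norm into a sup-norm bound). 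Second, in the injectivity step, after concluding $w_1\equiv w_2$ you should note $L_1\equiv L_2$ follows from $L=\lvert\phi\rvert_\sigma^2/H$, so the full pullback metrics agree and $h_2\circ h_1^{-1}$ is a marking-compatible isometry; with these repairs your proof is complete and faithful in outline to the one the paper sketches, differing only in the mechanism for properness.
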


The proof uses the following B\"{o}chner-type equation that can be derived from \eqref{harm}:
\begin{equation}\label{boch}
\Delta u = e^u - \lvert q\rvert^2 e^{-u}
\end{equation}
where $u = \log \lVert \partial_z h\rVert$. This equation can in fact be shown to be \textit{equivalent} to the harmonic map equation \eqref{harm};  namely, lifting to the universal cover, it is the Gauss-Codazzi equation for a constant-curvature spacelike immersion in Minkowski space $\mathbb{R}^{2,1}$, and the Gauss map of this immersion recovers the (equivariant)  harmonic map to the hyperbolic plane (see \cite{Wan}). 

As in Theorem \ref{param}, the strategy of the proof of Wolf's parametrization  is to show that the map $\Phi$ is proper, injective and continuous.
The injectivity is a consequence of an argument using the maximum principle on \eqref{boch} and this time, continuity is immediate. The proof of properness relies on expressing the energy density in terms of the Hopf differential, and showing that the \textit{energy} of the harmonic map $h_Y$ is a proper function as $Y$ varies in \Te space (see \cite{Goldman-Wentworth} or \cite{Toledo}, for generalizations of this).

\subsection*{Hopf differential and the image of the harmonic map}\index{harmonic map!estimates} 
In the canonical coordinates $\xi = x+iy$ for the Hopf differential $q$,  the metric pullback \eqref{pullb} becomes:
\begin{equation}
h^\ast (\rho) = (e(h)+2) dx^2 + (e(h)-2)dy^2
\end{equation}
which implies that the horizontal (resp. vertical) directions are the directions of maximum (resp. minimum) stretch for the map $w$.

A finer estimate by an  analytical argument  examines \eqref{boch} and proves an exponential decay of the energy density $e(h)$ away from the zeros of $q$, thus relating the behaviour of the harmonic map with the singular-flat geometry of the Hopf differential. This yields the following statement --  see \S3.3 of \cite{Minsky},  \cite{Wolf-High} for details, \cite{Han-Remarks} for an exposition, and \cite{Dumas-Wolf} for a sharper estimate. 

\begin{prop}\label{gest}
Let $h:X\to Y$ be a harmonic diffeomorphism where $Y$ is a surface equipped with a hyperbolic metric $\rho$. Let $\gamma_{horiz}$ be a horizontal trajectory on $X$ with respect to the singular-flat structure induced by  $q$, and let $\gamma_{vert}$ be a vertical trajectory.

Then there exists constants $C, \alpha>0$ such that 

\begin{itemize}
\item if $\gamma_{horiz}$ is of length $L$, then it  is mapped to an arc on $Y$ of geodesic curvature less than $Ce^{-\alpha R}$ and of hyperbolic length $ 2L \pm Ce^{-\alpha  R}$, and

\item if  $\gamma_{vert} $  is of length $L$, then it is mapped to an arc of hyperbolic  length  less than $L \cdot Ce^{-\alpha  R}$,
\end{itemize}
where $R>0$ is the radius of an embedded disk (in the $q$-metric) containing either segment, that does not contain any singularity.
\end{prop}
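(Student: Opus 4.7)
The plan is to work in canonical coordinates for the Hopf differential $q$ on the embedded $q$-disk of radius $R$ and to extract every estimate from a maximum-principle analysis of the B\"{o}chner equation \eqref{boch}, which forces the energy density $e(h)$ to approach its ``model'' value $2|q|$ exponentially fast away from the zeros of $q$.

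First, since the disk contains no zero of $q$, I would introduce canonical coordinates $\xi = x + iy$ so that $q = d\xi^2$ on the Euclidean disk $B(0, R)$. Setting $\mathcal{H} = \rho(h)|h_\xi|^2$ and $\mathcal{L} = \rho(h)|h_{\bar\xi}|^2$, the identity $|q|^2 = \mathcal{H}\mathcal{L}$ gives $\mathcal{H}\mathcal{L} \equiv 1$ on the disk. Introducing $w := \tfrac{1}{2}\log \mathcal{H}$, which is positive because the Jacobian $\mathcal{H} - \mathcal{L}$ of the diffeomorphism $h$ is positive, a short calculation starting from \eqref{pullb} yields
\begin{equation*}
h^{\ast}\rho = 4\cosh^2(w)\, dx^2 + 4\sinh^2(w)\, dy^2,
\end{equation*}
and the B\"{o}chner equation \eqref{boch} reduces, after reparametrization, to $\Delta w = 2\sinh(2w)$. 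Since $\sinh(2w) \geq 2w$ for $w \geq 0$, this gives the sub-solution inequality $\Delta w \geq 4w$.

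The heart of the proof is a barrier comparison. A standard global maximum-principle argument, using that $X$ is compact and $Y$ is hyperbolic, provides a universal upper bound $w \leq M$ depending only on $X$ and $Y$ (\textit{cf.}\ \cite{Wolf0}, \cite{Minsky}). On $B(0, R)$ I would then compare $w$ to the radial barrier
\begin{equation*}
\phi(r) = M \cdot \frac{I_0(2r)}{I_0(2R)},
\end{equation*}
where $I_0$ is the modified Bessel function of the first kind satisfying $I_0'' + I_0'/r = 4 I_0$, so that $\Delta \phi = 4\phi$. Since $w \leq M = \phi$ on $\partial B(0, R)$ and $\Delta(w - \phi) \geq 4(w - \phi)$, the forbidden-positive-interior-maximum principle for the operator $\Delta - 4$ propagates $w \leq \phi$ to the interior. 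Using the large-$R$ asymptotics $I_0(2R) \sim e^{2R}/\sqrt{4\pi R}$, this gives $w(0) \leq C e^{-\alpha R}$ for any fixed $\alpha < 2$; by translating the barrier to any point along the segment (at the cost of shrinking the effective radius by a constant factor) the same bound holds pointwise along $\gamma_{horiz}$ or $\gamma_{vert}$. The length estimates are then immediate: using $L \leq 2R$,
\begin{equation*}
\int_{\gamma_{horiz}} 2\cosh(w)\, dx = 2L + O\!\bigl(L\, e^{-2\alpha R}\bigr) = 2L \pm C e^{-\alpha' R}, \qquad \int_{\gamma_{vert}} 2|\sinh(w)|\, dy \leq C L e^{-\alpha R}.
\end{equation*}

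The geodesic-curvature estimate is the subtlest step. I would upgrade the $C^0$-decay of $w$ to a $C^1$-decay on a concentric subdisk by applying interior Schauder estimates to the semilinear equation $\Delta w = 2\sinh(2w)$; this yields $|\nabla w| \leq C e^{-\alpha R}$ with a slightly smaller $\alpha$. Since horizontal trajectories are mapped to geodesics in the ``model'' case $w \equiv 0$, the geodesic curvature of $h(\gamma_{horiz})$ in $Y$ is controlled to leading order by $|\nabla w|$, giving the required bound. The main obstacle I anticipate is precisely this last step: writing the geodesic curvature of the image curve explicitly in the canonical coordinates and verifying that it is indeed dominated by $|\nabla w|$ plus higher-order terms requires a careful computation comparing the image curve to the model geodesic -- this is the place where the refinements of \cite{Dumas-Wolf} become essential.
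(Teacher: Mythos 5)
Your overall route is exactly the one the paper is implicitly invoking (it gives no proof, deferring to Minsky \S3.3, Wolf and Han): pass to canonical coordinates, set $w=\tfrac12\log\mathcal{H}$ with $\mathcal{H}\mathcal{L}\equiv 1$, rewrite the pullback metric \eqref{pullb} as $4\cosh^2(w)\,dx^2+4\sinh^2(w)\,dy^2$, derive $\Delta w\geq c\,w$ from \eqref{boch}, and get exponential decay of $w$ by a linear barrier; the length bounds then follow by integration, and for the curvature your instinct is right and in fact easier than you fear: since $h$ is an isometry from $(X,h^\ast\rho)$ to $(Y,\rho)$, the image curvature is the geodesic curvature of $y=\mathrm{const}$ in the diagonal metric above, which a one-line computation gives as $|w_y|/(2\cosh w)$, so $C^1$-decay of $w$ (interior Schauder, as you say) suffices without any input from Dumas--Wolf.

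The genuine gap is in how you feed boundary data to the barrier. First, there is no global bound $w\leq M$ on $X$: in the $q$-normalization $\mathcal{H}=\rho|h_\xi|^2$ blows up at the zeros of $q$ (there $h_{\bar z}=0$ but $h_z\neq 0$, while $|q|\to 0$), so $w\to\infty$ at the singularities, and points of $\partial B(0,R)$ may be arbitrarily close to zeros lying just outside the disk. You must either run the comparison on a shrunk disk $B(0,R-1)$, whose boundary stays at $q$-distance $\geq 1$ from the zero set (where compactness does give a bound $M$, now depending on $h$), or use the classical interior estimate in which the supersolution blows up at $\partial B(0,R)$ (the Ahlfors--Schwarz/Cheng--Yau-type argument used by Minsky and Wolf), which has the added advantage of constants independent of $Y$. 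Second, your step ``translate the barrier to any point along the segment at the cost of shrinking the effective radius by a constant factor'' does not work under the literal hypothesis that some $R$-disk merely \emph{contains} the segment: a point of the segment can lie at distance $\epsilon\ll R$ from the disk boundary, with a zero of $q$ just beyond it, and then the zero-free disk centered at that point has radius $\epsilon$, not a definite fraction of $R$, so no decay is obtained there (and indeed the stated estimate requires the hypothesis, as in the cited sources, that the $R$-neighborhood of the segment -- equivalently the $R$-disk about each of its points -- is embedded and free of singularities). With that reading the translation step is immediate and your argument closes up; as written, it does not.
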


\medskip 

We shall see some examples in \S8.3, in the context of harmonic maps from $\mathbb{C}$ to the Poincar\'{e} disk. \\

As a consequence of this estimate, Wolf showed that his parametrization extends to the Thurston compactification\index{Thurston!compactification} by projectivized measured foliations, that we mentioned in \S3.

\subsection*{Non-abelian Hodge correspondence}\index{non-abelian Hodge correspondence}  
Wolf's parametrization of \Te space is equivalent to that in Hitchin's seminal work (\cite{Hitchin} -- see \S11 of that paper) which was subsequently subsumed in a much more general correspondence, proved by Simpson (see \cite{Simpson}). In particular, there is a Kobayashi-Hitchin correspondence between the variety of surface-group representations in $\mathrm{SL}_n(\mathbb{R})$ up to conjugation, and the ``Hitchin section" of the moduli space of stable rank-$n$ Higgs bundles on a Riemann surface $X$.  A key intermediate step is the existence of equivariant harmonic maps from the universal cover of $X$, to the symmetric space  $\mathrm{SL}_n(\mathbb{R})/\mathrm{SO}(n)$, which for $n=2$ is precisely the hyperbolic plane, that was discussed in this section. 
For a survey focusing on this interaction of harmonic maps and \Te theory,  see \cite{Went-Daska}.

\section{The Hubbard-Masur Theorem}

It is a consequence of classical Hodge theory that the space of holomorphic differentials  ($1$-forms) on a Riemann surface $X$ can be identified with the first cohomology of the underlying smooth surface with real coefficients:
\begin{equation*}
H^0(X, K) \cong H_1(S, \mathbb{R})^\ast = H^1(S, \mathbb{R})
\end{equation*}
where the identification is via the real parts of periods, namely,  $\omega \mapsto  \displaystyle\int\limits_{\gamma_i}  \text{Re}   (\omega)$ where $\gamma_i$ varies over a basis of homology. The analogue of this identification for holomorphic \textit{quadratic} differentials, is the following theorem of Hubbard and Masur (\cite{HM}), where the objects on the topological side are measured foliations,  that we introduced in \S3.

\begin{thm}[Hubbard-Masur]\label{hm}\index{Hubbard-Masur theorem} Let $X\in \tg$ where $g\geq 2$.
The map $\Phi_{HM}:Q(X) \to \mathcal{MF}$ that assigns to a holomorphic quadratic differential  its vertical measured foliation, is a homeomorphism.
\end{thm}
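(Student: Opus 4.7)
The plan is to apply Brouwer's invariance of domain. As noted in \S2, $Q(X)$ is a complex vector space of complex dimension $3g-3$, hence a real $(6g-6)$-manifold; and $\mathcal{MF}$ is a $(6g-6)$-manifold by the train-track charts of \S3. So it suffices to show that $\Phi_{HM}$ is continuous, injective, and proper: the first two conditions give an open embedding by invariance of domain, and properness then forces surjectivity since $\mathcal{MF}$ is connected.

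Continuity is handled by train-track coordinates on $\mathcal{MF}$: on a chart $U$ carried by a train track $\tau$, the weight of $\Phi_{HM}(q)$ on a branch $b$ is the transverse measure $\int_b |\mathrm{Re}\,\sqrt{q}|$, which depends continuously on $q$ since $\sqrt{q}$ varies holomorphically with $q$ off of its zeros. Properness follows from the identification of Jenkins-Strebel differentials as solutions of the extremal-length problem \eqref{extprob}: for such $q$ one has $\mathrm{Ext}_X(\mathcal{F}_v(q)) = \|q\|$, and by continuity and density of Jenkins-Strebel differentials in $Q(X)$ the same identity persists in general. Since extremal length is a proper function on $\mathcal{MF}$ for a fixed Riemann surface $X$, any sequence $q_n \to \infty$ in $Q(X)$ forces $\mathcal{F}_v(q_n) \to \infty$ in $\mathcal{MF}$.

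The main obstacle is injectivity. If $q_1, q_2 \in Q(X)$ have equivalent vertical measured foliations, I would pass to a common spectral double cover $\pi:\hat{X}\to X$ branched over the union of the odd-order zeros of $q_1$ and $q_2$; on $\hat{X}$ the pullbacks become squares $\pi^\ast q_i = \omega_i^2$ of holomorphic 1-forms $\omega_i$ anti-invariant under the covering involution. The equivalence of the vertical foliations translates into the equality (up to sign) of the cohomology classes $[\mathrm{Re}\,\omega_i] \in H^1(\hat{X};\mathbb{R})^{-}$, since these classes are computed by integrating $|\mathrm{Re}\,\omega_i|$ along lifts of simple closed curves on $X$. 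Classical Hodge theory on the compact Riemann surface $\hat{X}$ then tells us that a holomorphic 1-form is uniquely determined by the cohomology class of its real part, so $\omega_1 = \pm\omega_2$ and therefore $q_1 = q_2$.

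The delicate technical points I anticipate are the careful matching of the branches of $\sqrt{q_i}$ when passing to the common cover (in particular when $q_1$ and $q_2$ have distinct zero loci, so that the cover has ``extra'' branch points where one of the $\omega_i$ is in fact regular), and verifying that Whitehead-equivalence at the level of measured foliations descends cleanly to equality of real periods on $\hat{X}$. Once these bookkeeping issues are resolved, the Hodge-theoretic uniqueness closes the argument, and the invariance-of-domain framework assembles the theorem.
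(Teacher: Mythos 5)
Your overall skeleton (continuity, injectivity, properness, then Brouwer's invariance of domain plus connectedness of $\mathcal{MF}\cong\R^{6g-6}$) is a legitimate framework, and it mirrors how properness-injectivity-continuity is used elsewhere in this survey for Teichm\"{u}ller's and Wolf's parametrizations; the continuity via train-track weights is fine, and the properness argument can be made to work if you cite Kerckhoff's nontrivial result that extremal length extends continuously (and properly) to $\mathcal{MF}$. But the injectivity step, which you treat as bookkeeping, is where the theorem actually lives, and your argument for it has a genuine gap. First, the ``common spectral double cover'' generally does not exist: the double cover on which $\pi^\ast q$ admits a global square root is the one determined by the $\mathbb{Z}/2$-monodromy of $\sqrt{q}$ around its odd-order zeros, and for $q_1\neq q_2$ these monodromy homomorphisms, and hence the covers, differ; on a cover branched over the union of the odd zero sets, at least one of the pullbacks typically has no global square root (one would have to pass to the fiber-product four-fold cover, where the two relevant anti-invariant subspaces of cohomology are different, so the comparison you want still does not make sense). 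Second, and more fundamentally, equivalence of the vertical foliations only says that $\int_\gamma \lvert \mathrm{Re}\sqrt{q_1}\rvert = \int_\gamma \lvert \mathrm{Re}\sqrt{q_2}\rvert$ for every simple closed curve $\gamma$. Because of the absolute value this is not a cohomological statement: periods are signed integrals, and a general cycle sees cancellation that the transverse measure does not record; moreover Whitehead equivalence even allows $q_1$ and $q_2$ to have different singularity structures (e.g.\ a $4$-prong versus two $3$-prongs), so there is no induced identification under which $[\mathrm{Re}\,\omega_1]=\pm[\mathrm{Re}\,\omega_2]$ could be asserted. If that implication were available, Hubbard--Masur would be a corollary of classical Hodge theory -- which is exactly the naive analogy (stated at the start of \S6) that the theorem is designed to go beyond.

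In the actual proofs, injectivity is obtained by genuinely different means: a length-area / minimal-norm (``heights'') argument in the Marden--Strebel and Gardiner--Masur treatments, or, in Wolf's approach sketched in this section, from the uniqueness of the equivariant harmonic map $\tilde{X}\to T_F$ to the dual $\mathbb{R}$-tree, whose Hopf differential is the quadratic differential in question; surjectivity in that approach is also constructed directly (existence of the harmonic map), rather than extracted from invariance of domain. So your proposal is structurally reasonable on continuity and properness, but the Hodge-theoretic shortcut for uniqueness does not close, and repairing it essentially requires importing one of these known arguments.
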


\textit{Remark.} The same holds, of course, for the horizontal measured foliation, but the sketch of the proof at the end of the section is most natural for the vertical one. \\

As discussed in \S3, two measured foliations are equivalent if the induced transverse measures are the same. Moreover,  for the vertical foliation, the  measure of a transverse loop $\gamma$ is $\displaystyle\int\limits_{\gamma}  \lvert  \text{Re} (\sqrt q) \rvert$, and thus these transverse measures can thus be thought of as encoding ``periods" of the quadratic differential. \\

The original proof involved a construction of a continuous {section} $\sigma_F$ of the cotangent bundle $T^\ast \tg$, for each measured foliation $F\in \mathcal{MF}$, such that $\sigma(X) \in Q(X)$ has an induced vertical foliation $F$. The case when the foliation $F$ had prong-type singularities of higher order involved a delicate analysis of ``perturbations" into a generic singularity set.  A shorter proof was given by Kerckhoff  (\cite{Kerck}, see also \cite{Gardiner} for an exposition) where he used the existence (and uniqueness) of Jenkins-Strebel differentials having one cylinder of prescribed height.  Both these proofs used the fact that such special Jenkins-Strebel differentials are dense in $T^\ast \tg$, as proved by Douady and Hubbard (\cite{D-H}).\\

Wolf's proof uses harmonic maps to $\mathbb{R}$-trees (see \cite{Wolf2, WolfT}); it is this strategy, that we briefly sketch below, that was employed in the generalization of the Hubbard-Masur theorem to meromorphic quadratic differentials, that we shall discuss in \S8. An equivalent approach was developed by Gardiner and others (see \cite{Gard1,Gard-Lakic}) in which the holomorphic quadratic differential on $X$ emerges as the solution to an extremal problem, similar to the case of Jenkins-Strebel differentials. \\

We begin with two definitions:

First, an \textit{$\mathbb{R}$-tree}\index{$\mathbb{R}$-tree} is a geodesic metric space, such that any two points has a unique embedded path between them that is isometric to a real interval; this generalizes the notion of a simplicial tree (and in particular, may have locally infinite branching). In our context, the leaf-space $T_F$ of the lift of a measured foliation $F$ to the universal cover is an $\mathbb{R}$-tree (see Chapter 11 of  \cite{Kapovich} or \cite{Otal} for an exposition).

Second, an equivariant map from the universal cover of a Riemann surface  to an $\mathbb{R}$-tree is \textit{harmonic} if convex functions pull back to subharmonic functions (see \cite{Went-Daska}). In particular, there is a well-defined Hopf differential, which is holomorphic, as in the case of smooth targets. This forms a special case of a deeper theory of harmonic maps to such non-positively curved metric spaces, developed by Korevaar-Schoen (\cite{KorSch1, KorSch2}).

\subsection*{Strategy of Wolf's proof}

Fix a compact Riemann surface $X$ on which we aim to realize a measured foliation $F$. Passing to the universal cover, one can show that there is a $\pi_1(X)$-equivariant harmonic map from $\tilde{X}$ to $T_F$, the $\mathbb{R}$-tree dual to the lift of $F$.
The proof of this existence theorem proceeds by showing that an energy-minimizing sequence is equicontinuous; the uniform boundedness, in particular, is a consequence of a finite energy bound that we have because of compactness of $X$.

The preimage of a point on $T_F$ is a vertical leaf of the Hopf differential $\tilde{q}$, since it is clearly along directions of maximal collapse. Recall that each point of $T_F$ also corresponds to a leaf of $\tilde{F}$; a topological argument shows that in fact  the vertical foliation of $\tilde{q}$ is precisely the lift of $F$. By the equivariance of the map, $\tilde{q}$ descends to the desired holomorphic quadratic differential on $X$.

\section{Schwarzian derivative and projective structures}

A \textit{(complex) projective structure}\index{projective structure} on a surface $S$ is an atlas of charts to $\cp$ such that the transition maps on the overlaps are restrictions of M\"{o}bius transformations. Piecing together the charts by analytic continuation in the universal cover $\tilde{X}$, one can define a developing map $f:\tilde{X} \to \cp$ that is equivariant with respect to the holonomy representation $\rho:\pi_1(S) \to \pslc$. The space $\mathcal{P}_g$ of marked projective structures on a genus $g\geq 2$ surface $S$  (up to isotopy) admits a forgetful projection map $$p:\mathcal{P}_g \to \tg$$ that records the underlying Riemann surface structure. 

Such structures arise in connection with the uniformization theorem -- indeed, hyperbolic surfaces provide examples of projective structures which have \textit{Fuchsian} holonomy, and a developing image that is a round disk $\D \subset \cp$.  Other examples of projective structures include a \textit{quasi-Fuchsian} structure\index{projective structure!quasi-Fuchsian} in which the developing image is a {quasidisk} $\Omega$,  and the holonomy is a discrete subgroup of $\pslc$.  See \cite{Kra-Maskit} or \cite{Shiga} for more on the relation of projective structures to Kleinian groups. \\

The fibers of the projection map $p$ can be parametrized by holomorphic quadratic differentials; indeed, we have:

\begin{prop}\label{prop9}  The set of projective structures on a fixed marked Riemann surface $X \in \tg$ is parametrized by $Q(X)$.
\end{prop}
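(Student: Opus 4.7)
The strategy is to use the Schwarzian derivative to identify the difference of two developing maps defined on the same Riemann surface with a holomorphic quadratic differential. Recall that for a locally injective holomorphic function $f$ on a domain in $\mathbb{C}$, the Schwarzian derivative is
\begin{equation*}
S(f) = \left(\frac{f''}{f'}\right)' - \frac{1}{2}\left(\frac{f''}{f'}\right)^2,
\end{equation*}
and it satisfies two crucial properties: (i) $S(M \circ f) = S(f)$ for every M\"{o}bius transformation $M$, with $S(f) \equiv 0$ if and only if $f$ is itself M\"{o}bius; and (ii) the cocycle rule $S(f \circ g) = (S(f)\circ g)\cdot (g')^2 + S(g)$, which implies that the \emph{difference} of two Schwarzians of maps on overlapping charts of $X$ transforms as a quadratic differential.

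\textbf{Definition of the map.} Fix a reference projective structure $P_0 \in p^{-1}(X)$ (for instance, the Fuchsian one coming from uniformization, whose developing map $f_0:\tilde{X}\to \mathbb{D}$ is the universal covering map). Given any $P\in p^{-1}(X)$ with developing map $f_P:\tilde{X}\to \mathbb{CP}^1$, both $f_0$ and $f_P$ are locally biholomorphisms compatible with the same complex structure on $X$. Define
\begin{equation*}
\Psi(P) := S(f_P \circ f_0^{-1}),
\end{equation*}
which is a holomorphic function on $\mathbb{D}$. The equivariance under $\pi_1(S)$ together with the M\"{o}bius-invariance property (i) implies that $\Psi(P)$ descends to a holomorphic quadratic differential on $X$; by (ii), this differential agrees (up to the cocycle correction coming from transition functions of $X$) with $S(f_P) - S(f_0)$, which gives an equivalent, intrinsic definition independent of the choice of lifts.

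\textbf{Injectivity.} If $\Psi(P)=\Psi(P')$, then $S(f_P\circ f_0^{-1}) = S(f_{P'}\circ f_0^{-1})$ on $\mathbb{D}$, and property (i) forces $f_P = M\circ f_{P'}$ for some $M\in \mathrm{PSL}_2(\mathbb{C})$. Thus the two developing maps (and holonomies) differ by a global conjugation, so $P$ and $P'$ determine the same marked projective structure.

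\textbf{Surjectivity.} Given $\phi\in Q(X)$, construct a projective structure with $\Psi(P)=\phi$ by solving the second-order linear ODE $u'' + \tfrac{1}{2}\psi \, u = 0$ on $\tilde{X}$, where $\psi$ is the pullback of $\phi + $ (the quadratic differential $S(f_0)$ for the chosen reference) written in a local coordinate; any two linearly independent solutions $u_1, u_2$ give a ratio $f = u_1/u_2$ which is locally injective and satisfies $S(f) = S(f_0)+\phi$ in every chart. Equivariance of the construction under the deck group of $\tilde X\to X$ follows because solutions transform linearly under monodromy, so ratios transform by M\"{o}bius transformations, giving a well-defined holonomy representation. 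Thus $f$ is the developing map of a projective structure $P$ with $\Psi(P)=\phi$.

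\textbf{Main obstacle.} The conceptual step that requires care is the surjectivity: one must verify that the developing map constructed from the ODE is not merely locally defined but genuinely equivariant, so that it descends to bona fide projective charts on $X$ with a well-defined holonomy in $\mathrm{PSL}_2(\mathbb{C})$. This follows from the linearity of the ODE (which ensures that deck transformations act linearly on the two-dimensional solution space) together with the fact that $\phi$ is a genuine quadratic differential on $X$ (so the coefficient of the ODE is single-valued up to the prescribed transformation rule), but it is this compatibility -- the bridge from local analytic data to a global geometric structure -- that constitutes the crux of the argument.
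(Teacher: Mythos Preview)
Your proof is correct and follows essentially the same route as the paper: the Schwarzian derivative in one direction, and the second-order linear ODE $u'' + \tfrac{1}{2}\tilde q\,u = 0$ with $f = u_1/u_2$ in the other. The only differences are presentational --- you make the reference structure $P_0$ explicit in the definition $\Psi(P) = S(f_P \circ f_0^{-1})$ and organize the argument as injectivity/surjectivity, whereas the paper computes $S(f)\,dz^2$ directly in the uniformizing coordinate (where $f_0 = \mathrm{id}$ and $S(f_0)=0$) and then checks that the two constructions are mutual inverses via the Wronskian.
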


As we shall see below, this identification with $Q(X)$ is not a canonical one, but depends on a choice of a base projective structure $X_0$ (for example, the uniformizing Fuchsian structure)  with holonomy  $\Gamma_0$, that shall correspond to the zero quadratic differential.

\begin{proof}[Sketch of the proof of Proposition \ref{prop9}]

In one direction,  consider the \textit{Schwarzian derivative}\index{Schwarzian derivative}  of the developing map $f$, defined as:
\begin{equation*}
S(f) =  \left(\frac{f^{\prime\prime}}{f^\prime} \right)^\prime  - \frac{1}{2} \left(\frac{f^{\prime\prime}}{f^\prime} \right)^2  \text{ }.
\end{equation*}
This  measures the deviation of any univalent holomorphic map $f$ from being a M\"{o}bius map.
  In particular, one can verify that whenever  $A$ is a M\"{o}bius map, that is, an element of $\pslc$, we have:
\begin{center}
$S(A) \equiv 0$,  $S(A\circ f) = S(f)$, and $S(f \circ A)(z)  = S(f) \circ A(z) A^\prime(z)^2 \text{ }.$
\end{center} 
This implies, in our setting, by the $\rho$-equivariance of the developing map $f$, that the holomorphic quadratic differential $S(f)dz^2$ on $\tilde{X}$ is invariant under the action of $\Gamma_0$ and descends to the Riemann surface $X$.

In the other direction, given a quadratic differential $q \in Q(X)$,  we can define a corresponding projective structure by considering the \textit{Schwarzian equation} on $\tilde{X}$ given  by 
\begin{equation}\label{schw2} 
u^{\prime\prime} + \frac{1}{2}\tilde{q}u = 0
\end{equation}
where $\tilde{q}$ is the lift to the universal cover.  The ratio $f:= u_1/u_2$  of two linearly independent solutions $u_1$ and $u_2$  then defines the developing map to $\cp$, and the monodromy along loops on $X$ yields the holonomy representation $\rho$.

  By \eqref{schw2} the Wronskian $u_1u_2^\prime - u_1^\prime u_2$ is constant, and it is an exercise to show that the Schwarzian derivative $S(f)$ recovers $q$, hence establishing that the constructions in the two directions are in fact inverses of each other.
\end{proof}

\begin{cor} The space of projective structures  $\mathcal{P}_g$ is an affine bundle over $\tg$ modelled on the vector bundle $Q_g \to \tg$\index{Teichm\"{u}ller space!co-tangent bundle}.
\end{cor}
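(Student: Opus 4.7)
The plan is to upgrade the fiberwise identification of Proposition \ref{prop9} to a global affine bundle structure. First, I would fix the following key observation: if $Z_1, Z_2 \in p^{-1}(X)$ are two projective structures with the same underlying Riemann surface $X$, and with respective developing maps $f_1, f_2 : \tilde{X} \to \cp$, then the difference $S(f_1) - S(f_2)$ is a well-defined holomorphic quadratic differential on $X$. Indeed, by the cocycle properties of the Schwarzian recorded in the proof of Proposition \ref{prop9}, each $S(f_i)$ is equivariant under the relevant surface group action and descends to an element of $Q(X)$, so their difference does too.

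This gives a natural action of $Q(X)$ on the fiber $p^{-1}(X)$: given $q \in Q(X)$ and $Z \in p^{-1}(X)$ with developing map $f_Z$, define $q \cdot Z$ to be the projective structure whose developing map is obtained by solving the Schwarzian equation \eqref{schw2} for the differential $S(f_Z) + q$. The proposition guarantees this is well-defined, and the cocycle identity for the Schwarzian shows the action is free and transitive. Hence each fiber is an affine space modelled on the vector space $Q(X)$.

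To promote this to a bundle structure, I would produce a continuous global section of $p$, the most natural one being the Fuchsian section $\sigma_{\text{Fuch}} : \tg \to \mathcal{P}_g$ that sends $X$ to its uniformizing Fuchsian projective structure $X_{\text{Fuch}}$. Composing the affine bijection $Q(X) \to p^{-1}(X)$, $q \mapsto q \cdot X_{\text{Fuch}}$, with the bundle projections yields a fiber-preserving homeomorphism $Q_g \to \mathcal{P}_g$ over the identity on $\tg$. Any other continuous section $\sigma$ differs from $\sigma_{\text{Fuch}}$ by a continuous assignment $X \mapsto q_X \in Q(X)$, so the transition between the two resulting trivializations is a fiberwise translation by an element of $Q_g$, which is precisely the data of an affine bundle.

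The main obstacle is the continuity (in fact holomorphicity) of the trivialization map $(X, q) \mapsto q \cdot X_{\text{Fuch}}$ jointly in $X \in \tg$ and $q \in Q(X)$. This follows from two ingredients: the continuous dependence on parameters of solutions to the linear ODE \eqref{schw2}, and the continuity of Fuchsian uniformization in the moduli $X \in \tg$. Once this is verified, the corollary is established, and one also recovers the classical fact that $\mathcal{P}_g$ is a complex manifold of dimension $6g-6$, diffeomorphic (but not canonically so) to the total space of the cotangent bundle $T^\ast \tg$.
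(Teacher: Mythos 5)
Your proposal is correct and follows essentially the same route as the paper, which treats the corollary as an immediate consequence of Proposition \ref{prop9} together with the remark that the identification of each fiber with $Q(X)$ depends on a choice of base projective structure (e.g.\ the Fuchsian one). Your write-up simply makes explicit what the paper leaves implicit: the canonical difference $S(f_1)-S(f_2)\in Q(X)$ giving the free transitive fiberwise action, the Fuchsian section as a global trivialization, and the continuity coming from parameter dependence of solutions of \eqref{schw2}.
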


Projective structures in fact gives rise to a \textit{global} realization of $\mathcal{T}_g$ as a bounded complex domain in $Q(X)$.  This relies on the following deep observation of Bers (see \cite{Bers-Simul}) concerning the quasi-Fuchsian projective structures that we defined above:

\begin{thm}[Simultaneous Uniformization]\index{Bers!simultaneous uniformization} For any pair $(X,Y) \in \tg\times \tg$, there is a unique quasi-Fuchsian structure $QF(X,Y)$ determined by a discrete subgroup $\Gamma < \pslc$ that leaves invariant a quasi-circle $\Lambda_\Gamma$ in $\hat{\C}$ with complementary components $\Omega_\pm$, such that $\Omega_+/\Gamma = X$ and $\Omega_-/\Gamma = Y$. 
\end{thm}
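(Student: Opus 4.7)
The plan is to build $QF(X,Y)$ as a quasiconformal deformation of a Fuchsian group by invoking the Measurable Riemann Mapping Theorem referenced in \S4. Start by uniformizing $X$ as $X \cong \mathbb{H}^+/\Gamma_0$ for some Fuchsian group $\Gamma_0 < \mathrm{PSL}_2(\mathbb{R}) < \pslc$; the lower half-plane quotient $\mathbb{H}^-/\Gamma_0$ is the mirror surface $\overline{X}$. The idea is to leave $\Gamma_0$ conformal on $\mathbb{H}^+$ but to deform its action on $\mathbb{H}^-$ so as to replace $\overline{X}$ by $Y$.

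Concretely, pick a marking-preserving quasiconformal homeomorphism $g: \overline{X} \to Y$ and lift it to the universal covers to obtain a $\Gamma_0$-equivariant Beltrami differential $\mu^-$ on $\mathbb{H}^-$ with $\lVert \mu^-\rVert_\infty < 1$. Extend by zero to get a $\Gamma_0$-invariant Beltrami differential $\mu$ on all of $\hat{\C}$, and apply the MRMT to produce a unique normalized quasiconformal homeomorphism $F : \hat{\C} \to \hat{\C}$ solving $F_{\bar z}/F_z = \mu$. Because $\mu$ is $\Gamma_0$-invariant, for each $\gamma \in \Gamma_0$ the conjugate $F \gamma F^{-1}$ has identically vanishing Beltrami coefficient, hence is M\"{o}bius; thus $\Gamma := F\Gamma_0 F^{-1}$ is a discrete subgroup of $\pslc$ isomorphic to $\Gamma_0$. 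Setting $\Omega_\pm := F(\mathbb{H}^\pm)$, conformality of $F$ on $\mathbb{H}^+$ yields $\Omega_+/\Gamma \cong X$, while on $\mathbb{H}^-$ the map $F$ descends to a marked quasiconformal map $\overline{X} \to \Omega_-/\Gamma$ with the same Beltrami coefficient as $g$; uniqueness of qc maps with prescribed dilatation (up to conformal factor) identifies $\Omega_-/\Gamma$ with $Y$. Finally $\Lambda_\Gamma = F(\mathbb{R} \cup \{\infty\})$ is the image of a round circle under a quasiconformal homeomorphism of $\hat{\C}$, hence a quasi-circle.

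For uniqueness, suppose $\Gamma'$ is another such group with invariant quasi-circle $\Lambda_{\Gamma'}$ and components $\Omega'_\pm$ satisfying the same marked quotient identifications. The marked isomorphisms lift to $(\Gamma,\Gamma')$-equivariant biholomorphisms $\Omega_\pm \to \Omega'_\pm$, which piece together to a homeomorphism $\hat{\C} \to \hat{\C}$ that is conformal off $\Lambda_\Gamma$. The main obstacle is to upgrade this to a global M\"{o}bius map: this requires the removability of quasi-circles for conformal extension, which is the deepest analytic input beyond the MRMT itself, and is the standard device in Bers' argument. Once removability is invoked, the extension is a global element of $\pslc$ conjugating $\Gamma$ to $\Gamma'$, so $QF(X,Y)$ is unique up to conjugation as claimed.
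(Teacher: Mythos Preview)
The paper does not supply its own proof of this theorem: it is stated as a ``deep observation of Bers'' with a citation to \cite{Bers-Simul}, and the exposition immediately moves on to the Bers embedding. So there is no in-paper proof to compare against; your argument is being measured only against correctness.

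Your existence argument is the standard one and is sound: uniformize $X$ by a Fuchsian group, extend the Beltrami differential of a marking map $\overline{X}\to Y$ by zero across $\mathbb{H}^+$, solve via the Measurable Riemann Mapping Theorem, and conjugate. The invariance-of-$\mu$-implies-M\"{o}bius step and the identification of the two quotients are both fine.

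In the uniqueness paragraph there is one point you pass over too quickly. You assert that the two equivariant biholomorphisms $\Omega_\pm\to\Omega'_\pm$ ``piece together to a homeomorphism $\hat{\C}\to\hat{\C}$,'' but this requires that their continuous extensions to $\Lambda_\Gamma$ agree. The reason they do is that each is equivariant with respect to the \emph{same} isomorphism $\Gamma\to\Gamma'$ (fixed by the markings), and the limit set is the closure of any orbit; equivalently, each extends to a quasiconformal self-map of the sphere conjugating $\Gamma$ to $\Gamma'$, and such a conjugacy is determined on the limit set by the group isomorphism. Once you have stated this, your invocation of conformal removability of quasi-circles finishes the argument exactly as in Bers' original proof.
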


Fixing an $X\in \tg$, and varying $Y$ over $\tg$, then defines the \textit{Bers embedding}\index{Bers!embedding} 
\begin{equation*}
B_X:\tg \to Q(X) \cong \C^{3g-3}
\end{equation*}
by taking the holomorphic quadratic differential corresponding to the projective structure $QF(X,Y)$. 

For more details, and various open questions about this embedding, see the survey \cite{GupSesh-survey}. For a more comprehensive treatment of this approach to \Te theory, see the books \cite{Nag, Ima-Tani}. \\

\subsubsection*{Grafting} To conclude this section,  we mention that Thurston showed that  any projective structure is obtained from a Fuchsian one by a geometric operation of \textit{grafting}\index{grafting} along a measured lamination $\lambda$\index{measured lamination} on a hyperbolic surface $X$. We describe this operation only in the case that $\lambda$ is a simple closed geodesic with weight $t$: here, a projective annulus $A_t$ is grafted into $X$ along the curve, where $A_t$ is the quotient of the wedge $\{ 0 \leq  Arg(z) \leq t\} \subset \C$ by the hyperbolic element that is the holonomy of the curve. More generally, a measured lamination can be approximated by weighted multicurves, and a grafted surface can be defined in the limit. The resulting surface $gr(X, \lambda)$ is a new complex projective surface; in fact, Thurston showed\index{Thurston!grafting theorem} that the map $gr: \tg \times \mathcal{ML} \to \mathcal{P}_g$ is a homeomorphism. See \cite{KamTan} and \cite{Tanig} for an idea of the proof, and \cite{KulPink} for a more general context.

  By scaling the measure on the lamination $\lambda$ by $t\geq 1$, one obtains a one-parameter family of projective structures that project to \textit{grafting rays}\index{grafting!ray} in $\mathcal{T}_g$, that are in fact asymptotic to \Te geodesic rays\index{Teichm\"{u}ller!ray} that we saw in \S4 (see \cite{Gup1}). 

Furthermore, the work of Dumas (\cite{Dum2}) compares the Schwarzian derivatives of the corresponding projective structures to the holomorphic quadratic differentials corresponding to $t\lambda$ via the Hubbard-Masur theorem.   For more on this, and the topics mentioned in this section, see  the extensive survey on projective structures by Dumas \cite{Dum0}.

\section{Meromorphic quadratic differentials}

The preceding sections were under the assumption that the underlying Riemann surface was compact; in this section we shall consider the case of a Riemann surface $X$ of finite type, that is, of finite genus and finitely many punctures, such that the Euler characteristic is negative.  

Throughout, we shall assume that the  quadratic differentials on $X$ are meromorphic with \textit{finite} order poles at the punctures. Here, a pole \textit{of order $n\geq 1$} means that the expression of the quadratic differential $q$  is 
\begin{equation}\label{localexp}
q = \left(a_nz^{-n} + a_{n-1}z^{1-n} + \cdots + a_1z^{-1} + a_0 + g(z) \right) dz^2
\end{equation}
in a choice of a coordinate disk $\mathbb{D}$ around the pole, where $a_i \in \mathbb{C}$  for $0 \leq i\leq n$ and $g(z)$ is a holomorphic function that vanishes at the origin (where the pole is). 

\subsection{Simple poles} 

A pole of order one is also called a ``simple" pole; in this case the induced singular-flat metric is still of finite area, since $\text{Area}_q(X) =\displaystyle\int_X \lvert q \rvert = \lVert q\lVert_{L^1(X)}$ and $\lVert z^{-1}{dz}^2\rVert_{L^1(\mathbb{D})}  < \infty$.
Indeed, the double cover $w\mapsto z=w^2$ branched at the simple pole  pulls back $q$ as in \eqref{localexp} to the  holomorphic quadratic differential  $4\left(a_1 + a_0w^2 + w^2g(w^2)\right)dw^2$, and much of the analysis of this case reduces to that for holomorphic quadratic differentials (on a compact surface)  via this branched cover. 

The space of such meromorphic quadratic differentials arises as the tangent space of the \Te space $\mathcal{T}_{g,n}$ of a surface of finite type, having genus $g$ and $n$ punctures, and the discussion in the previous sections can be carried forth in this case. This was in fact, the original setting of the work of \Te (see the commentary in \cite{Teich-comm}). 

In particular, a measured foliation\index{measured foliation!at a simple pole} induced by such a meromorphic quadratic differential has a ``fold" at each simple pole -- see Figure 3(a); the corresponding space of such measured foliations can be parametrized by train-tracks ``with stops"\index{train track!with stops} (see Chapter 1.8 of \cite{PenHar}) and the analogue of the Hubbard-Masur theorem  (Theorem \ref{hm}) holds. Considering $\mathcal{T}_{g,n}$ to be the space of marked hyperbolic surfaces of genus $g$ and $n$ cusps, the analogue of Wolf's parametrization (Theorem \ref{wparam}) is an early work of Lohkamp (see \cite{Lohk}). \\

\begin{figure}
	
  \centering
  \includegraphics[scale=0.35]{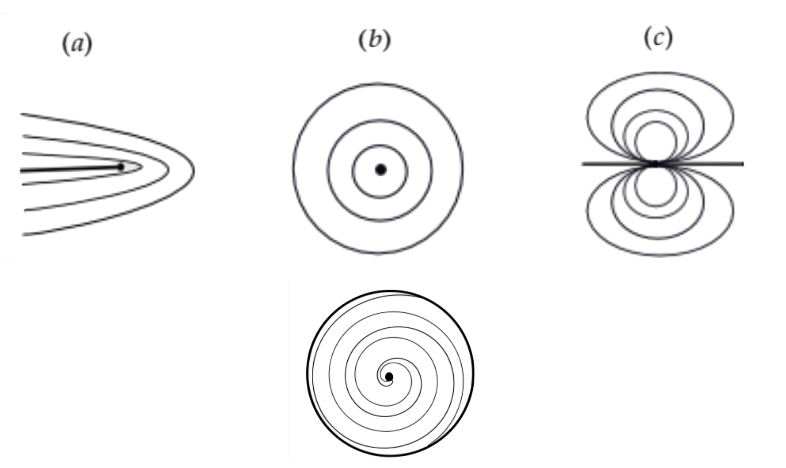}\\
  
  \caption{Structure of the induced horizontal foliation near (a) a simple pole, (b) a pole of order two, and (c) pole of order $4$.}
  \label{lune}
\end{figure}

\subsection{Poles of order two} 

In this case the induced singular-flat metric around the puncture has the form $\frac{dr^2 + rd\theta^2}{r^2}$ in polar coordinates, and is hence a semi-infinite Euclidean cylinder, and in particular, of infinite area. One way such a pole arises is in a limit of a sequence of  Jenkins-Strebel differentials\index{differential!Jenkins-Strebel} where the length of one of the Euclidean cylinders in the induced metric diverges to infinity.   This happens, for example, along \textit{Strebel rays}, that are \Te rays determined by a Jenkins-Strebel differential. 

The corresponding limiting quadratic differential is called a \textit{Strebel differential}\index{differential!Strebel}\index{Strebel!differential} (though sometimes it is also called a Jenkins-Strebel differential in the literature), which is meromorphic quadratic differential  with poles of order two, with the additional property that \textit{each non-critical horizontal leaf is closed}.

Indeed, we have the following existence (and uniqueness) theorem:

\begin{thm}[Strebel]\index{Strebel!theorem} Let $X$ be a compact Riemann surface of genus $g\geq 1$ and let $P = \{p_1,p_2,\ldots p_k\}$ be a non-empty set of points. Given positive real numbers $a_1,a_2,\ldots a_k$, there is a unique Strebel differential with poles of order two at the points of $P$, and such that for each $1\leq i\leq k$, the semi-infinite Euclidean cylinder at $p_i$  has circumference $a_i$.
\end{thm}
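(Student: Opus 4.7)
My plan is to realize $q$ as the solution of an extremal problem for moduli at the punctures, generalizing the Jenkins-Strebel extremization \eqref{extprob} summarized in \S3. I would fix once and for all a local holomorphic coordinate $z_i$ at each $p_i$, and consider the class $\mathcal{C}$ of configurations $\mathcal{D}=(D_1,\dots,D_k)$ of pairwise disjoint simply connected subdomains of $X$ with $p_i\in D_i$ and piecewise smooth boundary. For each $D_i$ I would define the reduced modulus
$$m_i(D_i)\;=\;\lim_{\epsilon\to 0^{+}}\Bigl(\operatorname{mod}\bigl(D_i\setminus\overline{\{|z_i|\le\epsilon\}}\bigr)\,+\,\tfrac{1}{2\pi}\log\epsilon\Bigr),$$
which is finite, and study the functional $F(\mathcal{D})=\sum_{i=1}^k a_i^{2}\,m_i(D_i)$ to be maximized over $\mathcal{C}$.

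First I would show that $\sup_{\mathcal{C}} F$ is finite and attained. Finiteness should follow from a length-area estimate in the style of Lemma~\ref{grz}, using crucially that the $D_i$ must remain pairwise disjoint inside the compact surface $X$; attainment would then come from a normal-families argument applied to a maximizing sequence, combined with upper semicontinuity of $m_i$ under Hausdorff convergence of boundaries. The analytically most delicate step is to recognize the extremizer $\mathcal{D}^{\ast}$ as the cylinder decomposition of a meromorphic quadratic differential: on each $D_i^{\ast}$ the extremal metric for $m_i$ should be $|q_i|\,|dz|^{2}$ for some quadratic differential $q_i$ with a double pole at $p_i$ of leading coefficient $-a_i^{2}/(4\pi^{2})$, so that $D_i^{\ast}$ becomes a semi-infinite flat cylinder of circumference $a_i$ foliated by closed horizontal trajectories. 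A first-variation argument under compactly supported deformations of the boundaries then forces the $q_i$ to glue continuously across every shared boundary arc, producing a single meromorphic quadratic differential $q$ on $X$ with the prescribed poles and closed-trajectory structure.

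Uniqueness is the cleaner half: any Strebel differential with the given data induces, via its cylinder decomposition, a configuration in $\mathcal{C}$ that realizes $\sup F$, and the equality case in the Cauchy--Schwarz step of the length-area estimate (exactly as in the last displays in the proof of Lemma~\ref{grz}) will force two such maximizers, and hence the underlying differentials, to coincide. The hard part will be the regularity claim hidden inside the gluing step: proving that the boundary arcs of $\mathcal{D}^{\ast}$ are real-analytic saddle connections of a single holomorphic quadratic differential, rather than merely piecewise smooth arcs, so that the $q_i$ truly assemble into a global $q$ on the critical graph. This is where one must push the Jenkins-Strebel extremal analysis into the non-compact setting at the punctures, and the details are carried out in Chapter VI of \cite{Streb}. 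A minor point worth flagging at the outset is that changing the coordinate $z_i$ by $z_i\mapsto \lambda_i z_i + O(z_i^{2})$ only shifts $F$ by the $\mathcal{D}$-independent constant $\tfrac{1}{2\pi}\sum a_i^{2}\log|\lambda_i|$, so the maximizer, and hence the Strebel differential it produces, is independent of these coordinate choices.
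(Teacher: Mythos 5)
Your proposal follows essentially the same route as the paper, which treats the Strebel differential as the solution of an extremal problem for the reduced moduli of disjoint punctured disks centered at the $p_i$ (the analogue of \eqref{extprob}), with the heavy analytic work --- attainment of the supremum, identification of the extremal configuration with the cylinder decomposition, and the equality-case uniqueness argument --- deferred to Chapter VI, \S23 of \cite{Streb}, exactly as you do. Your normalization of the leading coefficient $-a_i^2/(4\pi^2)$ and the observation that the coordinate-dependence of the reduced modulus only shifts the functional by a constant are both consistent with that treatment.
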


The proof involves showing that a Strebel differential arises as the solution of an extremal problem, similar to that of Jenkins-Strebel differentials  -- see \eqref{extprob} --  but involving, instead of the extremal lengths of the curves, the  (reciprocal of the) \textit{reduced modulus} of the punctured disks centered at the points of $P$.  
See \S4.1.2 of \cite{Mondello} for a sketch of the proof, and Chapter VI \S23 of \cite{Streb} or \cite{Arba-Corn} for a longer exposition.

The {critical} trajectories of a Strebel differential form a metric graph $G$  that is a retract of the punctured surface, as can be seen  by collapsing each semi-infinite cylinder in the vertical direction. The orientation of the surface also induces a \textit{ribbon graph} structure on the embedded graph $G$, namely, an orientation on the half-edges emanating from each vertex. 
By Strebel's theorem, there is a unique such metric ribbon graph associated  to any Riemann surface with punctures $(X,P)$ with a set of positive real parameters at each puncture; this gives a combinatorial description of decorated \Te space\index{Teichm\"{u}ller space!decorated} $\mathcal{T}_{g,n}\times (\mathbb{R}^+)^n$ that has been useful in various contexts (see for example \cite{Harer}, \cite{Kontsevich}, and see \cite{Penner}, \cite{Looi} and \cite{MulPenk} for related discussions). \\

Strebel differentials are not the only ones with a pole of order two, though; indeed, the local structure of the horizontal foliation depends on the coefficient of the $z^{-2}$-term. In the generic case, such a foliation comprises leaves that are not closed, but \textit{spiral} into the puncture (see the bottom of Figure 3(b))\index{measured foliation!at a pole of order two}.  A generalization of Strebel's theorem to cover such foliations was proved in \cite{GupWolf2}; see also the remark following Theorem \ref{genhm}.

\subsubsection*{Harmonic maps}\index{harmonic map} Meromorphic quadratic differentials with poles of order two also arise in the context of harmonic maps; in \cite{Wolf-Inf} Wolf showed that such a differential arises as the Hopf differential of a harmonic map of a noded Riemann surface to a hyperbolic surface obtained by ``opening up" the node. Indeed, a class of such differentials, together with a Fenchel-Nielsen\index{Fenchel-Nielsen}  twist parameter,  parametrize a  neighborhoods of a point in the boundary of augmented Teichm\"{u}ller space  $\hat{\tg}$ (that descends to the Deligne-Mumford compactification of $\mathcal{M}_g$ under the quotient by the action of the mapping class group). For some other instances where such differentials with double poles appear in this context, see \cite{Masur-Ext} or \cite{Hubb-Koch}.

\subsubsection*{An application}  As indicated in the beginning of this subsection, Strebel differentials\index{differential!Strebel} arise as limits of \Te rays\index{Teichm\"{u}ller!ray} determined by Jenkins-Strebel differentials. Thus, they are in fact useful in the study of the asymptotic behaviour of  such \Te rays -- see \cite{Amano1,Amano2}.  As we shall mention in the next subsection, higher order poles arise in limits of more general \Te rays, and we expect such meromorphic quadratic differentials  to play a role in analogous results.

\subsection{Higher order poles} 
For simplicity, we shall assume, throughout this section, that the Riemann surface $X$ has a \textit{single} puncture where the quadratic differential has a pole of order \textit{greater} than two. 

\subsubsection*{Polynomial quadratic differentials}\index{polynomial quadratic differential} The first case of interest is when $X$ is the complex plane; an elementary computation shows that a quadratic differential of the form $(z^{d} + a_{d-2}z^{d-2} + \cdots + a_0) dz^2$ has a pole of order $(d+2)$ at infinity.  The coefficients $a_0,\ldots a_{d-2}$ are complex numbers, and any such polynomial of degree $d$ can be taken to be monic and centered, as above, by composing with a suitable automorphism of $\C$.

The simplest example is the \textit{constant} differential $dz^2$ on $\C$; its induced metric is just the standard Euclidean metric on the plane, and its horizontal (resp. vertical)  foliation are the horizontal (resp. vertical)  lines on $\C$. In general,  a polynomial quadratic differential of degree $d$ induces a singular-flat metric  on $\C$ comprising $(d+2)$ Euclidean half-planes, and some number of infinite Euclidean strips. The induced foliation is the standard foliation on these domains, and  its leaf space is a planar metric tree. This tree has $(d+2)$ infinite rays dual to the foliated half-planes, and the remaining edges dual to the foliated strips,   having lengths equal to the transverse measures across them. In fact, this description allows one to show that the space of such measured foliations $\mathcal{MF}(\C, d+2)$\index{measured foliation!polynomial quadratic differential} is homeomorphic to $\R^{d-1}$ (see, for example, \cite{GupTD}). 

Such a polynomial quadratic differential also arises as the Hopf differential of a harmonic map from $\C$ to the hyperbolic plane (for the existence of such a map, see \cite{Wan}, and for a discussion of the uniqueness, the recent work \cite{Li}). The structure of the horizontal and vertical foliations discussed above, together with the estimates mentioned in Proposition \ref{gest}, implies that the image of such a map is an ideal  $(d+2)$-gon in $\mathbb{H}^2$ (see Figure 4), and in fact all  such ideal polygons arise this way (see \cite{H-T-T-W}).

 \begin{figure}
  \centering
  \includegraphics[scale=0.45]{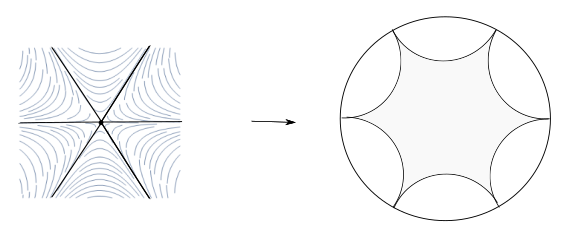}\\
    \caption{The induced horizontal foliation of $z^4dz^2$  on $\C$ (left) and the ideal polygon that is the image of the corresponding harmonic map (right). Proposition \ref{gest} implies that horizontal leaves in each sector that are far from the zero, map close to the geodesic sides. }
\end{figure}

\subsubsection*{Measured foliation with a pole singularity}\index{measured foliation!pole singularity} More generally, on a once-punctured Riemann surface $X$ of higher genus,  the local structure of the induced singular-flat geometry around any pole of order $n\geq 3$ comprises $(n-2)$ foliated half-planes.  This can also be seen in the example of the constant quadratic differential $dz^2$ on $\C$: any neighborhood $U$ of the pole at infinity would contain exactly two half-planes that are foliated by horizontal lines. Throughout this section, we shall consider the induced \textit{horizontal} foliation, with an understanding that the same results hold if considers the vertical foliation as well. 

By a result of Strebel (see Theorem 7.4 of \cite{Streb}), there is such a neighborhood $U$ of the pole such that any horizontal leaf entering it terminates at the pole. 
The induced measured foliation $F$ of a meromorphic quadratic differential on $X$ has additional real parameters at the pole, that records the structure of its restriction $F\vert_U$ . These parameters  can be thought of as lengths of edges in the metric graph that is the leaf-space of  the lift of the restricted foliation $F\vert_U$ (see Figure 5). Note that this metric graph has  $(n-2)$ infinite rays towards the pole, corresponding to each of the foliated half-planes.   In the universal cover,  the leaf-space of the lift  $\tilde{F}$ of the \textit{entire} foliation  is  then an ``augmented" $\R$-tree, which has an equivariant collection of such infinite rays. The endpoints of these rays determine an equivariant collection of additional vertices, and consequently finite-length edges, in the augmented $\mathbb{R}$-tree\index{$\mathbb{R}$-tree!augmented}. Analyzing the structure of these $\mathbb{R}$-trees, and in particular taking into account the length parameters of these additional edges,  one can show that the space of such measured foliations $\mathcal{MF}(X,n)$ is homeomorphic to $\R^{6g-6+n+1}$  (see \cite{GupWolf3}). \\

\begin{figure}
	
  \centering
  \includegraphics[scale=0.4]{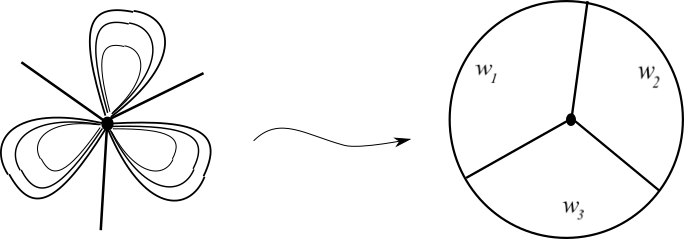}\\
    \caption{The foliation around a pole singularity (shown as the black dot) comprises foliated half-planes that appear as the ``petals" in the figure (left).  Its leaf-space is a metric graph with an infinite ray corresponding to each half-plane (right).  }
\end{figure}

The generalization of the Hubbard-Masur theorem then relies on fixing the \textit{principal part}\index{principal part} $P$ at the pole of order $n$, with respect to a coordinate disk $U\cong \mathbb{D}$ around it.  This is defined to be the polar part of $\sqrt{q\vert_U}$ comprising the terms for $z^{-i}$ where $i\geq 2$. Note that the difference $\sqrt{q\vert_U} - P$ has at most a simple pole at  the origin. In order to make sense of $\sqrt{q\vert_U}$ when $n$ is odd, the principal part $P$ can be thought of as a meromorphic $1$-form defined on the branched double cover of $U$.\\

\textit{Example.} We illustrate the last sentence with the following computation.  Consider the polynomial quadratic differential $q= (z^3 + az+b)dz^2$ where $a,b\in \C$.  This quadratic differential has a  pole of order $7$ at $\infty$, where it is of the form  $(w^{-7} + aw^{-5} + bw^{-4})dw^2$ on a disk $U$ equipped with the coordinate $w$ obtained by inversion. Substituting $\eta^2 = w$ to pass to the branched double cover $\hat{U}$, and taking a square root, we obtain
$\sqrt{q\vert_{\hat{U}}} =  2\left(\eta^{-6} + {\frac{1}{2}} a\eta^{-2} + \cdots\right) d\eta$ and thus the principal part $P$ of the original quadratic differential $q$ involves the complex coefficient $a$, and not $b$. \\

When the order of the pole $n$ is even, the real part of the {residue} of the principal part $P$\index{principal part!residue} is determined by the transverse measures of the induced horizontal foliation around the pole (see Lemma 5 of \cite{GupWolf3} for details). If the residue satisfies that, we say that the principal part $P$ is \textit{compatible} with the foliation; we say this compatibility always holds in the case that $n$ is odd.

We then have:

\begin{thm}[\cite{GupWolf3}]\label{genhm}\index{Hubbard-Masur theorem!generalization} Let $X$ be a once-punctured Riemann surface of genus $g\geq 1$ and fix a coordinate disk $U$ around the puncture $p$. Let $n\geq 3$ and let $F\in \mathcal{MF}(X,n)$ be a measured foliation on $X$ with a pole singularity at $p$. Then for any choice of principal part $P$ that is compatible with the foliation $F$, there exists a unique meromorphic quadratic differential with a pole of order $n$ at $p$ with that principal part on $U$, whose horizontal foliation is $F$. 
\end{thm}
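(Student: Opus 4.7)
The plan is to follow Wolf's harmonic-map strategy from \S6, adapted to the non-compact domain and to the prescribed asymptotic data at the pole. First, lift $F$ to $\tilde{F}$ on $\tilde{X}$ and form its augmented leaf-space $T_F$, which is an $\mathbb{R}$-tree carrying a $\pi_1(X)$-action together with an equivariant collection of infinite rays (one for each of the $(n-2)$ foliated half-planes at each preimage of $p$) and the attached finite edges encoding the $F\vert_U$-data. The goal is to produce a $\pi_1(X)$-equivariant harmonic map $h:\tilde{X} \to T_F$ whose Hopf differential $q$ descends to a meromorphic quadratic differential on $X$ with the prescribed principal part $P$ at $p$ and whose induced foliation realizes $F$.

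Next, the existence of $h$ is established by an exhaustion argument. Write $X_k = X \setminus U_k$ where $\{U_k\}$ is a shrinking nested family of coordinate subdisks of $U$. Directly from $P$ one builds a model equivariant map $h_P$ on $\tilde{U}\setminus \tilde{p}$ into the corresponding ends of $T_F$: on each half-plane of the singular-flat model of $P$ the model is the projection onto the corresponding infinite ray by $\text{Re}\int \sqrt{P}$ (passing to the branched double cover when $n$ is odd), while the finite-edge part of $T_F$ is pinned down by $F\vert_U$. The compatibility hypothesis on the residue of $P$ (for $n$ even) is precisely what forces this model to close up as a single-valued equivariant map on the punctured disk; for $n$ odd this is automatic. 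On each $\tilde{X}_k$ one then solves the equivariant Dirichlet problem with boundary values given by the restriction of $h_P$, producing a harmonic map $h_k:\tilde{X}_k \to T_F$ via the Korevaar-Schoen theory (\cite{KorSch1, KorSch2}).

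The heart of the argument, which I expect to be the main obstacle, is to pass to a limit $h = \lim h_k$. The total energy of $h_k$ is unbounded as $k \to \infty$ because the half-plane ends of the $q$-metric have infinite area, so one must work with a renormalized energy obtained by subtracting the divergent contribution of $h_P$ on $U \setminus U_k$, and establish both a uniform bound on this renormalized quantity and an equicontinuity estimate preventing $h_k$ from drifting along the infinite rays of $T_F$. The exponential-decay estimate of Proposition \ref{gest}, transported from the hyperbolic-target setting to the tree-target one, is the crucial technical input: it forces $h_k$ to be exponentially close to $h_P$ away from the zero set of the model, so the sequence is uniformly controlled on any fixed compact piece of $\tilde{X}$. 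A standard diagonal extraction then yields the desired equivariant harmonic map $h:\tilde{X} \to T_F$, asymptotic to $h_P$ near $p$.

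Finally, the Hopf differential $q$ of $h$ descends to a holomorphic quadratic differential on $X \setminus \{p\}$, and the leading-order asymptotic $h \sim h_P$ forces the polar expansion of $q$ at $p$ to have principal part exactly $P$, in particular a pole of order $n$. That the induced foliation of $q$ realizes $F$ follows from the topological argument in Wolf's proof sketched in \S6: preimages of points in $T_F$ under $h$ are the collapsing leaves of $q$ which, by construction of $T_F$, project to leaves of $F$, and equivariance descends the identification to $X$. For uniqueness, a Bochner / maximum-principle argument analogous to the injectivity proof of Theorem \ref{wparam}, applied to \eqref{boch} for both candidate harmonic maps with asymptotic data pinned down by $P$, forces the two associated metrics to coincide, and hence the two quadratic differentials are equal.
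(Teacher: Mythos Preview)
Your overall architecture matches the paper's: Wolf's harmonic-map-to-tree strategy, with model maps on the punctured disk determined by the principal part, a compact exhaustion, Dirichlet problems with model boundary values, and a limiting argument. The paper's remark following the proof sketch of Theorem~\ref{wild} confirms exactly this route for Theorem~\ref{genhm}, with the target replaced by the metric tree dual to the lift of $F\vert_U$.

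Two points of your proposal, however, diverge from the paper's mechanism and the first is a genuine gap.

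First, you identify Proposition~\ref{gest} as the ``crucial technical input'' for the uniform control needed to extract a limit. This is not how the paper proceeds, and there is an obstacle to making your version work: Proposition~\ref{gest} controls a harmonic map in terms of the singular-flat geometry of \emph{its own} Hopf differential $q_k$, whereas at this stage of the argument $q_k$ is unknown and varies with $k$; in particular the location of its zeroes, and hence the radius $R$ entering the estimate, is not a priori under control near $\partial X_k$. Invoking the zero set ``of the model'' does not help, since the estimate is tied to the actual Hopf differential, not to $P$. The paper instead obtains the uniform energy bound on a fixed compact subsurface by analysing a \emph{partial boundary value problem} on the annular region $X_k \setminus X_0$ (citing \cite{AndyThesis}), and showing directly that the Dirichlet solution there stays a uniformly bounded distance from the model map; this is what yields compactness on $X_0$ and replaces any renormalised-energy bookkeeping.

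Second, your uniqueness argument appeals to the B\"ochner identity \eqref{boch}. That equation is specific to smooth hyperbolic targets and is not available for maps into $\mathbb{R}$-trees. Uniqueness in the tree setting comes instead from the convexity of energy for NPC targets in the Korevaar--Schoen framework, combined with the asymptotic pinning at the pole supplied by $P$.
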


\textit{Remarks.} (1) Just like Theorem \ref{hm}, the above holds for the \textit{vertical} foliation as well; the compatibility condition then involves the imaginary part of the residue.  Moreover, the assumption on the genus $g$ is an artifact of the fact that we have assumed there is exactly one puncture; more generally, we only need the Euler characteristic $\chi(X)<0$.

(2) The same statement as above holds when $n=2$ (see \cite{GupWolf2}); in this case, the residue at the pole is determined by the coefficient of $z^{-2}$ at the order two pole, and determines the ``spiralling" nature of the horizontal foliation.\\

A special case of the above theorem is when the foliation $F$ has the property that all the non-critical leaves lie in foliated half-planes. The critical graph then forms a metric spine for the punctured surface $X \setminus p$, and a meromorphic quadratic differential with such a horizontal foliation is the analogue of Strebel differentials that we saw earlier.  The induced singular-flat metric is what we call a \textit{half-plane structure}\index{half-plane structure} comprising Euclidean half-planes attached to the critical graph. The choice of the principal part $P$ determines the singular-flat geometry of the resulting ``planar end" around the puncture (see \cite{GupWolf1}).

Like for Strebel differentials, such half-plane structures arise as limits of singular-flat surfaces along a \Te rays\index{Teichm\"{u}ller!ray}; see \S2 of \cite{GupLimits} for details, and \cite{Gup2} or \cite{GupLimits} for some applications.\\

A \textit{generic} meromorphic quadratic differential induces a  (horizontal or vertical)  measured foliation such that each leaf has at least one end terminating at the pole; as for polynomial quadratic differentials on $\C$, the induced singular-flat metric then comprises half-planes and infinite strips.  This trajectory structure\index{measured foliation!trajectory structure} is related to the theory of wall-crossing, Donaldson-Thomas invariants and stability conditions in the sense of Bridgeland (see \cite{BriSmi} and \cite{HKK}). 
For further work investigating the trajectory structure on various strata, that can be defined for the total space of the bundle of  meromorphic quadratic differentials over $\mathcal{T}_{g,1}$, see \cite{Tahar}.  For an investigation of $\mathrm{SL}_2(\R)$-dynamics on such strata, see \cite{Boissy}.

\subsubsection*{Crowned hyperbolic surfaces}\index{crowned hyperbolic surface} 
The generalization of Wolf's parametrization (Theorem \ref{wparam})  to the case of meromorphic quadratic differentials involves considering hyperbolic surfaces with certain non-compact ends corresponding to the higher order poles. Such an end is a \textit{hyperbolic crown} bordered by a cyclic collection of bi-infinite geodesics, each adjacent pair of which encloses a ``boundary cusp".  See Figure 6.
 The space of such crowned surfaces of genus $g\geq 1$, such that the crown end has $m\geq 1$ boundary cusps, is the ``wild" \Te space  $\mathcal{T}_g(m)$\index{Teichm\"{u}ller space!wild} that can be shown to be homeomorphic to $\mathbb{R}^{6g-6 + m+3}$ (see \S3 of \cite{GupWild}).  
 
 \begin{figure}
  \centering
  \includegraphics[scale=0.35]{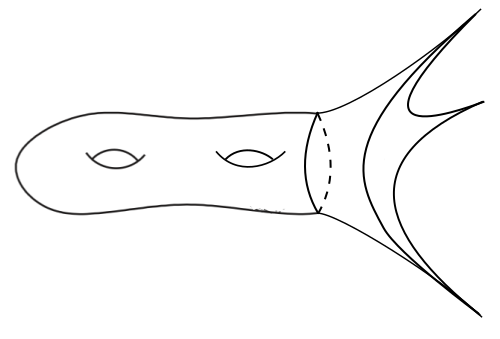}\\
    \caption{A crowned hyperbolic surface.}
\end{figure}

When $m$ is even, the \textit{metric residue} of the crown end is calculated by considering a truncation of the cusps, and taking an alternating sum of the lengths of the resulting geodesic sides (this is independent of the choice of truncation). \\

In \cite{GupWild} we proved:\index{Wolf's parametrization!generalization} 
\begin{thm}\label{wild} Let $X$ be a once-punctured Riemann surface of genus $g\geq 1$ and fix a coordinate disk $U$ around  the puncture $p$. Let 
\begin{itemize}
\item $Q(X, p, P)$ be the space of meromorphic quadratic differentials with a pole of order $n$ at $p$ that has principal part $P$ on $U$, 

\item $\mathcal{T}_g(P)$ be the subspace of  $\mathcal{T}_g(m)$ such that the metric residue of the crown end equals twice the real part of the residue of $P$\index{principal part!residue}, if $m$ is even and

\item $\mathcal{T}_g(P)$ be equal to  $\mathcal{T}_g(m)$ if $m$ is odd. 

\end{itemize}

Then  for any $Y \in \mathcal{T}_g(P)$, there is a unique harmonic map $h:X\setminus p \to Y$ such that the Hopf differential $\Psi(Y)$ lies in $Q(X, p, P)$. This defines a homeomorphism $\Psi: \mathcal{T}_g(P) \to Q(X, p, P)$.

\end{thm}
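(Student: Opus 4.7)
The plan is to follow the template of Wolf's proof of Theorem \ref{wparam}, establishing existence and uniqueness of a harmonic map $h:X\setminus p\to Y$ for each $Y\in\mathcal{T}_g(P)$, verifying that its Hopf differential has the prescribed principal part, and then showing that the resulting map $\Psi$ is a proper, injective, continuous map between manifolds of the same dimension, which by Brouwer's invariance of domain implies it is a homeomorphism. Both the source $X\setminus p$ and the target crowned surface $Y$ are now non-compact, so the essential new work is to control the harmonic map and its Hopf differential near the puncture.

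The existence of $h$ proceeds by a construction with prescribed asymptotic behavior. On the coordinate disk $U$ one writes down a model harmonic map by integrating $\sqrt{P}$ to produce canonical coordinates in which the map is an affine stretch onto a "model" crown end built from half-planes; when $m$ is even, the condition that the metric residue of the crown equals twice $\mathrm{Re}(\mathrm{Res}(P))$ is precisely what ensures the model closes up consistently around the puncture (via the integration of $\mathrm{Re}\sqrt{q}$ on loops encircling $p$, compare the residue computation in the example before Theorem \ref{genhm}). Exhausting $X\setminus p$ by compact subsurfaces $\Sigma_k$ whose boundary lies in $U$, one solves the Dirichlet problem on each $\Sigma_k$ with boundary data given by the model, and extracts a convergent subsequence using the a priori energy density bound from Proposition \ref{gest} (the vertical/horizontal decay away from zeroes gives uniform interior estimates, and the model controls the behavior near $\partial U$). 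The limit $h$ is harmonic, is asymptotic to the model, and one can check from the local form of the model that its Hopf differential indeed has principal part $P$ on $U$.

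For the remaining properties, injectivity uses the Bochner-type equation \eqref{boch} for $u=\log\lVert\partial_z h\rVert$: if two harmonic maps $h_1,h_2$ into $Y_1,Y_2$ share the same Hopf differential $q$, both $u_i$ satisfy the same equation, and the exponential decay of $e(h_i)-|q|$ near $p$ coming from Proposition \ref{gest} allows one to apply the maximum principle on an exhaustion and conclude $u_1\equiv u_2$, whence $h_2\circ h_1^{-1}$ is an isometry respecting markings and $Y_1=Y_2$. Continuity follows from continuous dependence of solutions of the harmonic map equation on the target, given the explicit dependence of the model on $P$ and $Y$. Properness is obtained by showing that if $Y_k\to\infty$ in $\mathcal{T}_g(P)$ then the energies of $h_k$ diverge, hence so do the $L^1$-norms of $q_k=\Psi(Y_k)$, via a Wolpert-type comparison of hyperbolic lengths on truncated compact cores with flat lengths of horizontal trajectories. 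Since $\dim\mathcal{T}_g(P)=\dim Q(X,p,P)=6g-6+n+1$ (with the residue constraint cutting the dimension by the same amount on both sides when $m$ is even), invariance of domain then completes the proof. The main obstacle will be the existence step: constructing the asymptotic model and proving the estimates that make the exhaustion converge, where the metric residue/residue compatibility is what makes the whole construction go through.
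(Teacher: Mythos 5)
Your overall architecture is the one the paper follows: a model harmonic map on the punctured disk determined by the principal part $P$ (with the metric residue condition matching the crown end when $m$ is even), Dirichlet problems on a compact exhaustion with boundary data taken from the model, a limiting argument, and then Wolf's scheme (injective via the Bochner equation, continuous, proper, invariance of domain). The gap is in the step you yourself flag as the main obstacle: you propose to extract the convergent subsequence ``using the a priori energy density bound from Proposition \ref{gest}.'' Proposition \ref{gest} cannot play that role. It is an asymptotic statement about a \emph{single} harmonic diffeomorphism, with constants governed by the singular-flat geometry of its \emph{own} Hopf differential (the radius $R$ of a disk avoiding the zeros); for the Dirichlet solutions $h_i$ on the exhaustion, their Hopf differentials -- hence the location of zeros and the relevant radii -- are precisely what is not yet controlled, and the proposition bounds images of trajectories rather than giving an upper bound on energy density uniform in $i$. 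The actual key analytic work in the proof is different: one studies a partial (free) boundary value problem on the annular regions between the compact core and the boundary of the exhaustion (following \cite{AndyThesis}) and proves that each $h_i$ stays a \emph{uniformly bounded distance from the model map} $\mu$ there; it is this distance estimate that yields uniform energy bounds on every fixed compact subsurface, prevents degeneration in the limit, and is what guarantees that the limiting Hopf differential actually has principal part $P$ (without it, the limit could a priori drift to a differential with a lower-order pole or a different principal part, even though each $h_i$ has the right boundary data).

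A second, smaller gap: the theorem asserts uniqueness of the harmonic map $h$ for each fixed $Y\in\mathcal{T}_g(P)$, not only injectivity of $\Psi$. Your Bochner/maximum-principle argument addresses injectivity (same Hopf differential forces $Y_1=Y_2$), but uniqueness of an \emph{infinite-energy} harmonic map in a given homotopy class with prescribed pole data is a separate and delicate point -- Hartman's compact-surface uniqueness does not apply, and the paper's reference to \cite{Li} for the uniqueness question already in the model case $\C\to\mathbb{H}^2$ is a signal of this. The argument again has to exploit the asymptotic closeness of any two candidate maps to the same model $\mu$ near the pole before a comparison or maximum-principle argument on an exhaustion can be closed up; as written, your sketch does not supply that input. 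Your dimension count also needs care (fixing $P$ constrains only the $Q$-side, while the metric-residue condition constrains only the $\mathcal{T}_g(m)$-side; the dimensions do agree, but not for the reason you state), though this is minor compared to the two points above.
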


\vspace{.1in}

\textit{Remarks.} (1) The constraint imposed by the residue in the case $m$ is even is a consequence of Proposition \ref{gest}, since horizontal leaves in the foliated half-planes around the pole map close to the geodesic sides of the crown end, with twice the length. \\
(2)  The theorem above holds in the case that $X = \C$ as well; in this case $Q(X, p, P)$ is the space of polynomial quadratic differentials\index{polynomial quadratic differential} of degree $(n-2)$ that we saw earlier, and $\mathcal{T}_g(P)$ is the space of ideal $n$-gons.  In the case of $n=4$ (degree two polynomials) the correspondence is explicit, since the metric residue of an ideal quadrilateral determines its cross ratio; this correspondence was observed in a different way in \cite{Au-Wan-Symm}.

\subsubsection*{Strategy of the proof} 
In Theorem \ref{wild}, even the \textit{existence}  of such a harmonic map from a non-compact domain\index{harmonic map!from a punctured Riemann surface} to a crowned hyperbolic surface requires work, the additional difficulty being that such a map  necessarily has \textit{infinite} energy  since the Hopf differential has a higher order pole.\\

This existence proof  involves the following steps:

\begin{itemize}

\item  \textit{Step 1.}  Determine a space of \textit{model harmonic maps} defined on a punctured disk. A crucial observation here is that the principal part of the Hopf differential determines the asymptotic behavior at the puncture. In particular, our desired principal part $P$ picks out a model map $\mu$ that our final harmonic map should be asymptotic to, on the coordinate disk $U$. \\

\item  \textit{Step 2.}  Define a sequence of harmonic maps $\{h_i\}_{i\geq 0}$ defined on a compact exhaustion  $$X_0 \subset X_1 \subset \cdots \subset X_i \subset \cdots$$
of the punctured surface $X\setminus p$, such that each solves a Dirichlet problem with the boundary condition determined by the desired model map $\mu$.  The key analytic work is to then show that  there is a uniform bound on the energy of these maps when restricted to a fixed compact subsurface.  This relies on the analysis of a \textit{partial boundary value problem}  (see \cite{AndyThesis}) on an annular region that is the difference $X_{i+1} \setminus X_0$, and proving that it is a uniformly bounded distance from the model map $\mu$. \\

\item \textit{Step 3.}  Finally, the convergent subsequence of harmonic maps obtained in the preceding step is shown to converge to a harmonic map on the punctured Riemann surface that has principal part $P$. \\

\end{itemize}

\textit{Remark.}   The same strategy is used in the proof of Theorem \ref{genhm}; in that case the model harmonic maps have a target metric tree that is dual to the lift of the desired foliation on $U$ to the universal cover.    \\

An alternative approach for the proof of the existence, using the method of sub- and super-solutions, is implicit in the work of \cite{Nie} (see also \S2 of \cite{Wild-Tambu}). In the more general context of the non-abelian Hodge correspondence\index{non-abelian Hodge correspondence} mentioned in \S5, the case when the Higgs field has \textit{irregular} or higher order poles was considered by Biquard and Boalch in \cite{BiqB}, where they also proved an existence theorem for the corresponding harmonic maps. \\

In \cite{GupLimits} we show how harmonic maps from a punctured Riemann surface to a crowned hyperbolic surface also arise in limits of harmonic maps between compact surfaces (as in Wolf's parametrization), when the target hyperbolic surface is fixed, and the  {domain} Riemann surfaces \textit{diverges} along a \Te ray\index{Teichm\"{u}ller!ray} which has a half-plane structure as a limit.

\subsection{Meromorphic projective structures} Finally, we mention some ongoing work generalizing the relation of quadratic differentials with projective structures that we saw in \S7\index{projective structure!meromorphic}.  

Meromorphic quadratic differentials with poles of order two arise as Schwarzian derivatives\index{Schwarzian derivative} of \textit{branched} projective structures (see, for example, \cite{Luo} or \S1.4 of \cite{GKM} ).  In the case of poles of higher order, the corresponding solutions of the Schwarzian equation define a space of \textit{meromorphic projective structures}, and the holonomy map defined on the space of such structures was studied in the recent work of \cite{AllBrid}.  The Schwarzian equation is part of a broader  study of linear differential systems on the complex plane, and in this context, the case of polynomial quadratic differentials on $\C$\index{polynomial quadratic differential}  is part of the classical literature (see, for example, \cite{Sib-book}). 

 It would be interesting to study the geometry of projective structures on punctured Riemann surfaces.  Meromorphic quadratic differentials are also known to arise in compactifications of various strata of the total space $Q_g$ of quadratic differentials -- see \cite{BCGGM1,BCGGM2}, \cite{Gendron} or Theorem 10 of \cite{EKZ}. It would also be interesting to explore the resulting compactification of the space of projective structures $\mathcal{P}_g$, and the degenerations that lead to the meromorphic structures on the boundary.

\vspace{.5in}

\bibliographystyle{amsplain}
\bibliography{hdrefs}

\end{document}